\documentclass{amsart} 

\usepackage{url}
\usepackage{amsmath,amsfonts,euscript,amsthm,amssymb,amscd}
\usepackage[T1]{fontenc}
\usepackage{lmodern}
\usepackage{epigraph}
\usepackage[utf8]{inputenc}
\usepackage{enumerate}
\usepackage{hyperref}
\usepackage{tikz-cd}

\input{diag}

\setlength\epigraphwidth{.8\textwidth}
\theoremstyle{definition}
\newtheorem{Def}{Definition}[subsection]
\newtheorem{Def-Prop}[Def]{Definition-Proposition}

\newtheorem{Th}[Def]{Theorem}

\newtheorem*{Th-non}{Theorem}
\newtheorem{remark}[Def]{Remark}
\newtheorem{Prop}[Def]{Proposition}
\newtheorem{Lem}[Def]{Lemma}
\newtheorem{Cor}[Def]{Corollary}

\DeclareMathOperator*{\clim}{\text{colim}}

\newcommand{\hdotc}{{\:\raisebox{1pt}{\text{\circle*{1.5}}}}}

\newcommand{\sslash}{\mathbin{/\mkern-6mu/}}
 \newcommand{\Ba}{\begin{array}}
 \newcommand{\Ea}{\end{array}}
\usepackage[all]{xy}
\newcommand{\C}{\mathbb{C}}
\newcommand{\R}{\mathbb{R}}
\newcommand{\Z}{\mathbb{Z}}
\newcommand{\Q}{\mathbb{Q}}
\newcommand{\fF}{\mathfrak{F}}
\newcommand{\cX}{\mathcal{X}}
\newcommand{\cM}{\mathcal{M}}
\newcommand{\cP}{\mathcal{P}}
\newcommand{\cG}{\mathcal{G}}

\usepackage{tikz}
 \tikzstyle{int}=[circle, draw,fill=black,outer sep=0,minimum size=3pt, inner sep=0]
  \tikzstyle{ext}=[circle, draw=black,outer sep=0,inner sep=1pt]

\title{Hidden symmetries of the Grothendieck--Teichm\"uller group}

\author{N.C. Combe, A. Kalugin}
\address{Max Planck Institute for Mathematics in Sciences\\
Inselstr. 22, 04103 Leipzig\\ Germany}

\begin{document}
\maketitle
%
%
\begin{abstract}
 We consider the Grothendieck--Teichm\"uller group  under a new aspect. Using real algebraic geometry and web theory we show that it preserves dihedral symmetry relations, present in the fundamental groupoids of configuration spaces of marked points on $\C$.
The motivation of this paper is to be understood  in the light of Grothendieck's initial philosophy 
  stating that throughout hidden symmetries of the moduli spaces of curves one can shed some light on the absolute Galois group. 
This appears as a new development of the construction of the avatar of the Grothendieck--Teichm\"uller group and prepares as well the ground for studying further relations to the motivic Galois group.
\end{abstract}
\tableofcontents

\keywords{{\bf Key words:} Grothendieck--Teichm\"uller group, groupoids, Dihedral groups, Coxeter chambers}

\section{Introduction}
 The Grothendieck--Teichm\"uller group is a mysterious object which is at the intersection of deep conjectures mixing number theory, algebraic geometry and topology (~\cite{Gro,Drin3,Kon0,LS}). It appeared first in Grothendieck's {\it Esquisse d'un programme}~\cite{Gro}, with the aim of giving a new description of the absolute Galois group $Gal(\overline{\Q}/\Q).$ Later on this object turned out to play a key role in an impressive number of seemingly unrelated areas of mathematics, such as: number theory and quasi-Hops algebras~\cite{Drin3}; number theory and braid groups~\cite{Ihara}; pro-finite groups and number theory \cite{PG,LNS}; the quantization conjecture for Lie bialgebras~\cite{EK}; formality theories  \cite{Kon0}; deformation quantizations of Poisson structures~\cite{T1}; solution to the Kashiwara--Vergne problem~\cite{AT}; theory of the cohomology groups of moduli spaces $\cM_g$ of genus $g$ algebraic curves~\cite{CGP1}; cohomologies of graph complexes~\cite{Will}. Further investigations concerning conjectures in~\cite{Drin3} have lead to other important results concerning the motivic Galois group and the pro-unipotent $GT^{un}$ group in~\cite{Brown}, ~\cite{DelF}  ~\cite{Fur}. Nowadays, it remains a source of important investigations \cite{Hain2}, \cite{CGP1}, \cite{Brown4}.

In this paper, a new perspective on the Grothendieck--Teichm\"uller group is given. Using an approach based on real algebraic geometry and web theory (see Sec. \ref{S:3}), as well as the recent developments of~\cite{Combe0,Combe4,CoMa21A,CoMa21B,CoMaMa1}, we prove that the Grothendieck--Teichm\"uller group preserves the dihedral symmetry relations of the fundamental groupoids  $\Pi_1(\overline{Conf}_n(\C))$  (showed in details in \cite{Combe4}), where $\overline{Conf}_n(\C)$ is the configuration space of $n$ marked points on $\C$. This is depicted in Th.~\ref{Th1}. 

Our paper is to be understood in the spirit of the {\it initial philosophy} of~\break
 Grothendieck, stating that throughout hidden symmetries of the moduli spaces of curves one can shed some light on the absolute Galois group.

The construction, presented in this article, is based on the approach of~\cite{Combe4,CoMa21B}. The latter served to introduce the notion of an {\it avatar of the Grothendieck--Teichm\"uller  group}. One key tool of this construction was to start by considering cyclotomic polynomials and their roots. This situation offers a rich interaction between what happens on the side of the Galois action on orderings of marked points and a very geometric (and symmetric) framework.  

Here, we deform these cyclotomic polynomials of type $z^n-1$ into any degree $n$ complex polynomial, by adding monomials in one variable of strictly lower degree than $n$. This provides a configuration space of unordered marked points $UConf_n(\C)$. The key to proving the existence of hidden dihedral relations within the Grothendieck--Teichm\"uller group is to proceed by using the {\it Gauss decomposition approach} \ref{S:3}. It is a stratification of $UConf_n(\C)$ which allows to index the strata by chord diagrams (the Gauss diagrams),  having dihedral symmetries. 

The Gauss decomposition was introduced first by Gauss in his proof of the fundamental theorem of algebra and, gave birth (around two centuries later!), to a real algebraic stratification of the configuration space of $n$ marked points on the complex plane, thoroughly investigated in \cite{MaSaSi,Ber,Ghys,ACam,Combe0,Combe1,Combe2}.  

This decomposition carries, among others, interesting dihedral symmetries \cite{Ber,Combe2,Combe4} that imply that to consider $UConf_n(\C)$ one can define a system of Coxeter chambers, where the Coxeter group is a group of dihedral type \cite{Combe0,Combe2}. 

Furthermore, \cite{Combe4} introduces a construction allowing to lift the dihedral symmetries of $UConf_n(\C)$  onto  the case  of configuration spaces with ordered marked points $Conf_n(\C)$. This allows to show that the Grothendieck--Teichm\"uller group {\it preserves} the dihedral symmetry relations, present in the collection of fundamental groupoids $\{\Pi_1(\overline{Conf}_n, \overline{Conf}^0_n(\R))\}_{n\geq 2}$. This work prepares the ground for studying further relations to the motivic Galois group, in relation to \cite{BCS,CGP1}. 

\smallskip 

\begin{Th}[Main Theorem]\label{Th1}
~
Let $Conf_n(\C)$ (resp. $Conf_n(\R)$) be the configuration space of $n$ labelled marked points in $\C$ (resp. $\R$).
Let $\Pi_1(Conf_n(\C), Conf_n^0(\R))$ be the fundamental groupoid, where $ Conf_n^0(\R)$ denotes the set of base points. 
Then, the Grothendieck--Teichmüller group $\mathrm {GT}$ preserves the dihedral symmetry relations, existing in $\Pi_1(\overline{Conf}_n(\C), \overline{Conf}_n^0(\R))$, for all $n\geq 2.$

\end{Th}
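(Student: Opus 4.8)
The plan is to recast the statement as a compatibility of two actions on one and the same object: the action of $\mathrm{GT}$ and the action of the dihedral symmetries coming from the Gauss decomposition, both on the fundamental groupoid $\Pi_1(\overline{Conf}_n(\C),\overline{Conf}_n^0(\R))$. First I would make the dihedral relations explicit. By the Gauss decomposition of Section~\ref{S:3}, the unordered space $UConf_n(\C)$ is stratified by chord (Gauss) diagrams, and the resulting system of Coxeter chambers carries a faithful action of a dihedral group generated by a rotation $r$ of order $n$ and a reflection $s$ of order $2$, subject to $s r s^{-1}=r^{-1}$. Invoking the lifting construction of~\cite{Combe4}, these symmetries lift from $UConf_n(\C)$ to the ordered space, yielding distinguished elements $\rho(r),\rho(s)$ inside $\Pi_1(\overline{Conf}_n(\C),\overline{Conf}_n^0(\R))$; the \emph{dihedral symmetry relations} of the statement are then the identities $\rho(r)^n=1$, $\rho(s)^2=1$, and $\rho(s)\rho(r)\rho(s)^{-1}=\rho(r)^{-1}$ read inside the groupoid.

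Next I would recall the $\mathrm{GT}$-action. An element $(\lambda,f)\in\mathrm{GT}$ acts on the tower of groupoids by deforming the standard braiding: the cyclotomic parameter $\lambda$ rescales the monodromy around the diagonal by $\lambda$-powers, while the associator $f\in\widehat{F}_2$ twists the composition of paths. The defining constraints on $(\lambda,f)$, namely the $2$-cycle relation $f(x,y)f(y,x)=1$, the $3$-cycle relation, and the pentagon identity on $\mathcal{M}_{0,5}$, are precisely the statements that this deformation is compatible with the symmetric-group action on marked points and with associativity. To say that $\mathrm{GT}$ \emph{preserves} the dihedral relations is to say that each $(\lambda,f)$ sends the distinguished pair $(\rho(r),\rho(s))$ to a pair satisfying the same three relations.

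The heart of the argument is to show that the $\mathrm{GT}$-action intertwines with $\rho(r)$ and $\rho(s)$. For the rotation I would use that, on the cyclotomic model $z^n-1$ underlying the avatar construction of~\cite{CoMa21B}, $r$ is realized by the cyclic relabelling $x_k\mapsto x_{k+1}$ of roots induced by $z\mapsto\zeta_n z$; since $\mathrm{GT}$ acts compatibly with cyclic relabellings, with the cyclotomic character value $\lambda$ governing the rescaling, the relation $\rho(r)^n=1$ is carried to itself by any $(\lambda,f)$. For the reflection, $s$ is realized by the real/complex-conjugation involution fixing the real base points $\overline{Conf}_n^0(\R)$, and I would show that conjugating the $\mathrm{GT}$-action by $\rho(s)$ returns the same action precisely because the $2$-cycle relation $f(x,y)f(y,x)=1$ makes $f$ equivariant under reversal of the two local branches exchanged by $s$. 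Combining the two, the braid relation $\rho(s)\rho(r)\rho(s)^{-1}=\rho(r)^{-1}$ is preserved, giving the full set of dihedral relations.

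The main obstacle I anticipate is the reflection step. Complex conjugation is anti-holomorphic, so $\rho(s)$ acts on the \'etale/profinite groupoid twisted by the complex-conjugation element of $\mathrm{Gal}(\overline{\Q}/\Q)$, and one must verify that the $f$-twist is genuinely symmetric under this twisted involution rather than under an honest automorphism. This is exactly where the reality of the base points $\overline{Conf}_n^0(\R)$ and the precise form of the $2$-cycle relation have to be used together. The cleanest route is to first establish the claim on the boundary strata indexed by the symmetric Gauss diagrams, where the chamber geometry is most rigid and the identification of $\rho(s)$ with the conjugation involution is forced, and then to propagate it to all of $\Pi_1$ using the connectivity of the stratification established in Section~\ref{S:3}.
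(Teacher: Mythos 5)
There is a genuine gap, and it begins with what you take the phrase ``dihedral symmetry relations'' to mean. In the paper these are \emph{not} identities $\rho(r)^n=1$, $\rho(s)^2=1$, $\rho(s)\rho(r)\rho(s)^{-1}=\rho(r)^{-1}$ satisfied by distinguished morphisms of the groupoid. The dihedral group $D_{4n}$ acts on $UConf_n(\C)$ by homeomorphisms (the maps $\frak{s}$, $\frak{t}$ of Prop.~\ref{Prop:DAct}, where $\frak{s}$ rotates the Gauss diagram by $\pi/2n$ and exchanges $\Re P$ and $\Im P$ --- note this is not your cyclic relabelling $z\mapsto\zeta_n z$ of the roots of $z^n-1$), hence it acts on $\Pi_1(UConf_n(\C),A^n)$ by \emph{functors} $g_\bullet$, not by elements of the groupoid. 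To promote $r,s$ to ``distinguished elements $\rho(r),\rho(s)$ inside $\Pi_1$'' you would need, for every base point $x$, a chosen homotopy class of path from $x$ to $g\cdot x$; no canonical choice exists, and with an arbitrary choice your three relations hold only up to elements of the vertex groups, so you obtain an extension problem rather than the relations themselves. If instead you keep the symmetries as automorphisms (the correct reading), your formulation of the theorem trivializes: any groupoid automorphism tautologically carries a tuple of morphisms satisfying given identities to a tuple satisfying the same identities, so ``preservation'' in your sense requires no proof and asserts nothing. What the paper actually proves is structural: using $D_{4n}$-stability of the real base points (Lem.~\ref{L:G-inv}), the $\clubsuit$ conditions (Sec.~\ref{S:clubsuit}), the Brown--Higgins--Taylor orbit-groupoid theorem (Cor.~\ref{C:Bro}, yielding Prop.~\ref{Prop:Qu}), and the Seifert--van Kampen theorem for groupoids applied to the Coxeter-chamber cover of Thm.~\ref{T:Combe}, it establishes the canonical splitting
\[
\Pi_1(UConf_n(\C),A^n)\;\cong\;\cG^n_1\star_{\cG^n_3}\bigl(\cG^n_2\sslash D_{4n}\bigr),
\]
and ``$\mathrm{GT}$ preserves the dihedral relations'' means that automorphisms of the tower $\textsf{PaB}$ in Bar-Natan's sense (identity on objects, compatible with the tower morphisms) respect this splitting after the lift to the ordered, compactified spaces via \cite{Combe4}. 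That is a statement about an amalgamated decomposition with an orbit-groupoid factor, not about a presentation of $D_{4n}$ realized inside the groupoid.

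Even granting your framework, the proof is incomplete at exactly the point you flag. The entire difficulty of your approach is the reflection step: $\rho(s)$ comes from an anti-holomorphic involution, so it interacts with the $(\lambda,f)$-action only through a Galois-twisted involution, and you state that one ``must verify'' the symmetry of the $f$-twist under it, then substitute a plan (check on symmetric boundary strata, propagate by connectivity) for an argument. The $2$-cycle relation $f(x,y)f(y,x)=1$ by itself does not supply this verification --- it is a relation in $\widehat{F}_2$ about exchanging two strands, not about compatibility with an anti-holomorphic symmetry of the whole stratified space --- and no lemma in your sketch closes this. By contrast, the paper's route never needs such an intertwining computation: once the splitting above is in place, preservation by $\mathrm{GT}=\mathrm{Aut}(\textsf{PaB})$ follows from the fact that these automorphisms fix objects and commute with the tower structure. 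If you want to salvage your approach, you would at minimum have to (i) replace ``elements $\rho(r),\rho(s)$'' by the induced automorphisms and reformulate what is to be preserved as the orbit-groupoid/amalgamation data, and (ii) actually carry out the conjugation-equivariance argument you defer, which is where all the work lies.
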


{\bf NB.} This statement holds for any of the three versions of Grothendieck--Teichmüller group, i.e. the pro-unipotent, profinite and the de Rham version that goes by the name of $GRT$ (see~\cite{Drin3}).

So to summarise, our approach allows to unravel some hidden symmetries existing within the Grothendieck--Teichm\"uller group, in the spirit of the initial philosophy of Grothendieck, giving thus a more geometric and different understanding of the Grothendieck--Teichm\"uller group. 

Those symmetries simplify the approach to this object which, such as stated, is difficult to explicitly compute. This also highlights potential relations to its counterpart: the dihedral symmetries of the motivic Galois group found in~\cite{BCS}.

\smallskip 

{\bf Plan}.

\medskip 
The paper is devoted to proving one main statement. The plan of the paper accordingly presents the proof of the statement and goes as follows. 
 \begin{itemize} 
\item[--] Sec. \ref{S:2} --  \ref{S:3} are recollections of important tools for the proof of the statement.
\item[--] Sec. \ref{S:2} surveys results about orbit groupoids based on the works of R. Brown, Higgins and Taylor. The pro-unipotent completion is discussed. 

\item[--] Sec. \ref{S:3} presents the new framework in which consider the configuration space of marked points on $\C$ and $\R$ is considered. This relies on developments given in~\cite{Combe0,Combe1,Combe2}, where a Gauss decomposition stratification is investigated,  Sec. \ref{S:3.1}--Sec.~\ref{S:Cox}. It turns out that the best compactification framework to work in is given by the  Fulton--MacPherson--Axelrod--Singer (FMAS) compactification Sec.~\ref{S:Comp}. 

\item[--] Sec.~\ref{S:4} is concerned with proving the main statement, i.e. that the ~\break
 Grothendieck--Teichmüller group $\mathrm {GT}$ preserves the dihedral symmetry relations, existing in $\Pi_1(\overline{Conf}_n(\C), \overline{Conf}_n^0(\R))$, for all $n\geq 2.$  It is done in several smaller steps, given by propositions and lemmas.  
\end{itemize}
\medskip 

\thanks{{\bf Acknowledgements} The first author thanks Yuri I. Manin and Dror Bar-Natan for interesting discussions on the subject and  acknowledges the Minerva fast track grant for supporting her research. The second author is grateful to the Max Planck Institute for Maths in Sciences for supporting his research.}

\section{Topological groupoids and their orbits}\label{S:2}

\subsection{Topological groupoids}\label{S:2.1}

We recall for the convenience of the reader some standard definitions on groupoids and fundamental groupoids, relying on the works of Brown, Higgins and Taylor~\cite{RBrown0,RBrown-D,Higgins1,Higgins2,HT,BH,Taylor}. Let $X$ be a connected, (locally simply connected) topological space and $A\subset X$ is a subspace such that each connected component of $A$ is simply connected. We define the \textit{Poincaré fundamental groupoid $\Pi_1(X,A)$ of $X$ with base points in $A$} as the following category:
\begin{enumerate}[(a)]
\item $\mathrm {Ob}_{\Pi_1(X,A)}=\pi_0(A)$\\
\item Let $A_i, A_j$ be connected components of $X$. Then, $\mathrm {Mor}_{\Pi_1(X,A)}(A_i,A_j)$ is set of homotopy classes of paths from a point $y_i\in A_i$ to a point $y_j\in A_j.$\footnote{Note that every choice of a point in $A_i$ gives canonically isomorphic groupoids.}
\end{enumerate} 

It is important notion to keep in mind that a Poincaré groupoid is a {\it functor} from the category of topological spaces to the category of groupoids $\textsf {Grp}$. More precisely if $f\colon X\longrightarrow Y$ is a morphism of topological spaces, them for any subset $A\subset X$ such that both $A$ and $f(A)\subset Y$ satisfie the conditions above, we have the \textit{pushforward functor}:
\begin{equation}\label{psh}
f_{\bullet}\colon \Pi_1(X,A)\longrightarrow \Pi_1(Y,f(A))
\end{equation}

Consider $X$ a (path connected) topological space and its subspaces $X_0, X_1,X_2$ such that $X_0$ denotes the intersection $X_1\cap X_2$; the interiors of $X_1, X_2$ covers $X$. Then, we have the following pushout diagram, defined in the category of topological spaces:

\begin{center}
\begin{tikzcd}
  X_0 \arrow[r, "i_1"] \arrow[d,"u_2"]
    & X_1 \arrow[d, "i_2" ] \\
  X_2 \arrow[r, "u_1" ]
&  X .\end{tikzcd}\end{center}

Consider a subset $A$ of $X$ such that $A$ meets each path component of $X_0,X_1,X_2$. 
Then, in the category of groupoids, we have the following pushout diagram:

\begin{center}
\begin{tikzcd}
  \Pi_1(X_0,A) \arrow[r, "\tilde{i}_1"] \arrow[d,"\tilde{i}_2"]
    & \Pi_1(X_1,A) \arrow[d, "\tilde{u}_1"] \\
  \Pi_1(X_2,A) \arrow[r,"\tilde{u}_2"]
& \Pi_1(X,A), \end{tikzcd}\end{center}

where the notations $\tilde{i}_1$ (resp. $\tilde{i}_2$) stand for the morphisms between $\Pi_1(X_0,A) \to \Pi_1(X_1,A)$ (resp. $\Pi_1(X_0,A) \to \Pi_1(X_2,A)$), with respective inclusions given by $i_1: X_0 \to X_1$ (resp. $i_2: X_0 \to X_2$). Analogously  $\tilde{u}_1$ stands for the morphism $\Pi_1(X_1,A) \to \Pi_1(X,A)$, where $u_2: X_1\hookrightarrow X$.   

\begin{remark}
This pushout diagram implies a certain commutativity of the diagram and a {\it universality property}. This universality property implies that the free product with amalgamation of groupoids exists.
\end{remark}

This pushout diagram construction leads to the Seifert Van Kampen theorem for groupoids (see works of~\cite{BHS}, Part II, Chap. 10). Let $A$ denote the set of {\it base points}, being a subset of $X_0$. 
Take two open sets $X_1$ and $X_2$ such that their union is $X$ (i.e. $X_1\cup X_2=X$). Then, there exists a natural morphism of groupoids:
\begin{equation}\label{E:Seifert van Kampen}
\Pi_1(X_1,A_1)\star_{\Pi_1(X_0,A_{0})}\Pi_1(X_2,A_2)\to \Pi_1(X,A),\end{equation}
being an isomorphism and $``\star"$ stands for the free product with amalgamation of groupoids. Here, we chose $A_i=X_i\cap A$ where $i\in \{0,1,2\}$. 

\subsection{Orbit groupoids}\label{S:2.2}
We now pass to the notion of orbit groupoids. Take $G$ a discrete group. We are interested in the action of this discrete group on a groupoid. 
For simplicity in this subsection we denote by $\Pi_1 X$ the Poincar\'e fundamental groupoid of $X$.
We give the explicit the conditions under which the fundamental groupoid of an orbit space $X/G$ is isomorphic to the orbit groupoid of the induced action of $G$ on the fundamental groupoid $\Pi_1 X$. This is denoted $\Pi_1 X \sslash G$ (see Def.  \ref{D:orbit1} and  Def. \ref{D:orbit2} for more details) and \cite{BH,HT}.

\smallskip 

Consider $(G,\cdot)$ a group, with its group operation denoted $\cdot$ and $X$ is a set on which $G$ acts. An action of $G$ on $X$ is given by a function $G\times X \to X$, written as \[(g,x)\mapsto g\cdot x.\]  One has the properties that $1\cdot x=x$ and $g\cdot (h\cdot x)=(gh)\cdot x.$ An equivalence relation is given by $x\sim y$ iff there exists $g\in G$ such that $g\cdot x=y.$ Equivalence classes form the orbits of the action and the
set of all those orbits is usually denoted as $X/G.$

The action of a group $G$ on $X$ is said to be {\it discontinuous} if the stabiliser of each point of $X$ is finite and each point $x$ in $X$ has a neighborhood $V_x$ so that any element of $g$ not lying in the stabiliser of $x$ satisfies $V_x\cap g\cdot V_x=\emptyset.$

Take the case of groupoids. We define the action of $G$ on a groupoid as follows.
\begin{Def}\label{D:orbit1}
Let $\Gamma$ be a groupoid. The action of $G$ on $\Gamma$ is such that  to each element $g\in G$ one assigns a morphism of groupoids $g_{\bullet}:\Gamma\to \Gamma$ with the properties that $1_{\bullet}=1:\Gamma \to \Gamma$ and for $g,h \in G$ then $(hg)_{\bullet}=h_{\bullet}\, g_{\bullet}$. 
If $x$ lies in the set ${\rm Ob}_\Gamma$ then $g\cdot x$ corresponds to $g_{\bullet}(x);$ if $a$ lies in ${\rm Mor}_\Gamma$ then we have $g\cdot a$ corresponding to $g_{\bullet}(a).$
\end{Def}

\smallskip 

So, if $G$ acts on $X$ then $G$ acts on the fundamental groupoid: each $g \in G$ acts as a homeomorphism of $X$ and $g_{\ast}:\Pi_1 X\to \Pi_1 X$ is the induced morphism. 

This leads us to the notion of an {\it orbit groupoid}.
\begin{Def}\label{D:orbit2} 
Let $G$ be a group acting on a groupoid $\Gamma$.
An orbit of the action is a groupoid denoted $\Gamma\sslash G$ equipped with an orbit morphism  $p:\Gamma\to \Gamma\sslash G$ such that if $g\in G,\, \gamma\in \Gamma$ then $p\, (\, g\cdot \, \gamma\, )=p(\gamma).$
\end{Def}

The morphism $p$ is universal. This implies that if $\Gamma\sslash G$ exists then it is unique up to some canonical morphism. In particular, given a morphism of groupoids $\phi: \Gamma\to \Phi$ and $\phi(g\cdot \gamma)=\phi(\gamma)$, for all $g\in G$, there exists a unique morphism 
$\phi^*:\Gamma\sslash G \to \Phi$ of groupoids such that $\phi^*p= \phi.$ One can resume this in the following triangular diagram:

\begin{center}
\begin{tikzcd}
  \Gamma \arrow[r, "\phi"] \arrow[d,"p"] & \Phi \\
 \Gamma\sslash G \arrow[ur, "\phi^*",dashed ] & .
  \end{tikzcd}
  \end{center}

\cite{BH}, Prop.1.8, 
shows that the groupoid morphism $p_{\bullet}: \Pi_1 X\to \Pi_1 (X/G)$ is an {\it isomorphism} \[p_{\bullet}:\Pi_1 X\sslash G \xrightarrow{\cong} \Pi_1 (X/G),\] provided that specific conditions (refered to as $\clubsuit$ conditions) are satisfied.

\smallskip 

\subsection*{Conditions $\clubsuit$}\label{S:clubsuit}
\begin{enumerate} 
\item The projection $p:X \to X/ G$ has a path lifting property. This means that if a path $\tilde{\bf a}$ is defined in the orbit space $X/G$ then there is a path ${\bf a}$ in $X$ such that $p({\bf a})=\tilde{\bf a}.$ Concerning this path lifting property, note that if the group $G$ acts on the (Hausdorff space) then the quotient map has naturally the path lifting property.

\item If $x\in X$, then this point $x$ has an open neighbourhood $U_x$ such that:

\begin{itemize}
 \item[-] for $g\in G$ not in the stabiliser one has: $U_x\cap g\cdot U_x=\emptyset$.  
 \item[-] If ${\bf a}_1$ and ${\bf a}_2$ are paths (starting at $x$) in the neighbourhood $U_x$ such that $p({\bf a}_1)$ and $p({\bf a}_2)$ are homotopic rel end points\footnote{Given a topological space $X$, two paths starting at the points $x_0$ and ending at $x_1$ are said to be {\it homotopic rel endpoints} or path homotopic if there is a homotopy $H$ between them such that the endpoints remain fixed during the homotopy. In other words, $\forall t \in I$, $H(0, t) = x_0$, and $H(1, t) = x_1$.} in $X/G$ then there is an element $g$ in the stabiliser such that $g\cdot {\bf a}_1$ and ${\bf a}_2$ are homotopic in $X$ rel end points.  
 \end{itemize}
 \end{enumerate}
 So, to conclude, if both conditions are satisfied the induced morphism makes $\Pi_1(X/G)$ the orbit groupoid of $\Pi_1 X$ by the action of $G$.
 
 \smallskip

 We can now finally, mention the final statement of this subsection.
Define $A$ to be a subset of the objects of the groupoid $\Gamma$. By $\Gamma A$ we denote the full subgroupoid of $\Gamma$ on the object set $A$. This means that all the morphisms of $\Gamma A$ are contained in the set of morphisms of $\Gamma$. Take $\lambda: \Gamma A \to \Gamma$ a morphism of $G$-groupoids. 

Now, suppose that $A$ is a $G$-invariant subset of the set ${\rm Ob}_{\Gamma}$  of objects of $\Gamma$; $A$ meets each component of the subgroupoid of $G$ left fixed by the action of $G$. Then, $(\Gamma A)\sslash G$ is a full subgroupoid of $\Gamma\sslash G$ so that the restriction of the orbit morphism $\Gamma\to \Gamma\sslash G$ to $\Gamma A \to (\Gamma \sslash G)(A/G)$ is itself an orbit morphism.

\smallskip 

By R. Brown we have the following: 
\begin{Cor}[\cite{BH}, Cor.2.14 and \cite{RBro},11.6.2 (Corollary 2)]\label{C:Bro} 
Suppose the action of $G$ on $X$ satisfies the $\clubsuit$ conditions. Let $A$ be a $G$-stable subset of $X$ meeting each path component of the fixed point set of each element of $G$. Then, $\Pi_1(X/G, A/G)$, the fundamental groupoid of $X/G$ on the set $A/G$, is canonically isomorphic to the orbit groupoid $\Pi_1(X,A)\sslash G$ of $\Pi_1(X,A)$.
\end{Cor}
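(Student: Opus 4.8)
The plan is to obtain this corollary by restricting to base points the two facts already assembled in this subsection. The first is the base-point-free isomorphism $p_{\bullet}\colon \Pi_1 X \sslash G \xrightarrow{\cong} \Pi_1(X/G)$ of \cite{BH}, Prop.~1.8, which holds precisely because the action satisfies the $\clubsuit$ conditions. The second is the purely algebraic statement recalled just above: for a groupoid $\Gamma$ carrying a $G$-action and a $G$-invariant object set meeting each component of the fixed subgroupoid, forming orbits commutes with passing to the full subgroupoid on that object set. The whole argument then reduces to restricting the known isomorphism to the correct full subgroupoids and checking that the hypotheses on $A$ are exactly what that restriction demands.

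First I would record two formal identifications. By the definition of the Poincar\'e groupoid with base points, $\Pi_1(X,A)$ is the full subgroupoid $(\Pi_1 X)(A)$ of $\Pi_1 X$ on the object set $A$, and likewise $\Pi_1(X/G, A/G) = (\Pi_1(X/G))(A/G)$. Since $p_{\bullet}$ acts on objects as the projection $p$, which carries $A$ onto $A/G$, restricting the isomorphism $p_{\bullet}$ to the full subgroupoids on these matching object sets yields an isomorphism
\[
(\Pi_1 X \sslash G)(A/G) \xrightarrow{\cong} (\Pi_1(X/G))(A/G) = \Pi_1(X/G, A/G).
\]
The restriction of an isomorphism of groupoids to full subgroupoids on corresponding object sets is again an isomorphism, so this step is automatic once $p_{\bullet}$ itself is known to be one.

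Next I would apply the full-subgroupoid statement to $\Gamma = \Pi_1 X$ and the $G$-invariant set $A$. It identifies $(\Pi_1 X)(A)\sslash G = \Pi_1(X,A)\sslash G$ with the full subgroupoid $(\Pi_1 X \sslash G)(A/G)$, and moreover asserts that the restricted projection $\Pi_1(X,A) \to (\Pi_1 X \sslash G)(A/G)$ is itself an orbit morphism. Composing with the isomorphism of the previous paragraph produces the desired identification $\Pi_1(X,A)\sslash G \cong \Pi_1(X/G, A/G)$; its canonicity is then forced by the universal property of the orbit morphism $p$, since any two solutions of that universal problem differ by a unique canonical isomorphism.

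The one step that is not purely formal, and which I expect to be the main obstacle, is matching the two fixed-point hypotheses. The full-subgroupoid statement requires $A$ to meet each component of the subgroupoid of $\Pi_1 X$ left fixed by $G$, whereas the corollary only assumes that $A$ meets each path component of the fixed-point set $\{x : g\cdot x = x\}$ of each $g \in G$. I would therefore establish a dictionary showing that the components of the fixed subgroupoid of $\Pi_1 X$ are detected exactly by the path components of these fixed-point sets. This is the place where the local part of the $\clubsuit$ conditions (condition (2)) is genuinely needed: it guarantees that paths agreeing up to the stabiliser action are honestly homotopic rel end points, which is what allows one to identify fixed components of the groupoid with fixed path components of the space. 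Once this translation between the topological and groupoid-theoretic fixed-point conditions is in hand, the chain of identifications above completes the proof.
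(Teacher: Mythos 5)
Your assembly is essentially the derivation the paper intends: the paper itself gives no proof of this corollary (it is quoted from Brown--Higgins and Brown's book), but it deliberately lines up the two ingredients you use --- the unpointed isomorphism $p_{\bullet}\colon \Pi_1 X \sslash G \xrightarrow{\cong} \Pi_1(X/G)$ under the $\clubsuit$ conditions, and the statement that for a $G$-invariant object set $A$ meeting each component of the fixed subgroupoid, $(\Gamma A)\sslash G$ is the full subgroupoid $(\Gamma\sslash G)(A/G)$ with the restricted projection still an orbit morphism --- and your composition of the two restrictions is exactly how the cited result is obtained.

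The one place where your plan goes astray is the final step. You present the matching of hypotheses as the main obstacle, propose to prove an exact dictionary between components of the fixed subgroupoid of $\Pi_1 X$ and path components of the fixed-point sets $X^g$, and claim that $\clubsuit$ condition (2) is needed for this. Neither claim is right. First, only one implication is needed: the corollary's hypothesis (that $A$ meets each path component of each $X^g$) must imply the subgroupoid statement's hypothesis (that $A$ meets each component of the fixed subgroupoid), and this is immediate --- every object of the fixed subgroupoid of $g_{\bullet}$ is a point of $X^g$, and any path lying inside $X^g$ is fixed pointwise by $g$, so its homotopy class is fixed by $g_{\bullet}$; hence a point of $A$ in the same path component of $X^g$ is in the same component of the fixed subgroupoid. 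No part of the $\clubsuit$ conditions enters here; they are consumed entirely by the unpointed isomorphism you already invoked. Second, the exact dictionary you propose to establish is false in general: the fixed subgroupoid can have strictly coarser components than the $X^g$, since a homotopy class can satisfy $g_{\bullet}[a]=[a]$ without $a$ being homotopic to any path inside $X^g$. So you should replace that paragraph by the one-line easy implication; as written, it sends you off to prove an unnecessary and generally untrue equivalence, though the rest of your argument stands.
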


\subsection{Pro-unipotent completion of Poincar\'e groupoids} 
We survey the construction of the pro-unipotent completion of Poincar\'e groupoids, that we refer to as the {\it Betti fundamental groupoids} and end up on a remark important for the construction of our result (see Rem. \ref{rem1}) concerning the product of pro-unipotent completions of (Poincar\'e) groupoids.

Some facts about Hopf algebroids are recollected, based on~\cite{DT} and \cite{BF}.
Let $(C,\Delta, \varepsilon)$ be a coassociative, counital coalgebra over a field $K,$ where $\Delta\colon C\longrightarrow C\otimes C$ is a comultiplication and $\varepsilon \colon C\rightarrow K$ is a counit of $C$. By $I$ we denote the kernel of a map $\varepsilon$ i.e. $I:=\mathrm {Ker}(\varepsilon)$ which we will call the augmentation ideal. By a completion of $C$ we understand the object $\widehat{C}:=\mathrm {lim}_{\leftarrow} C/ I^n.$ 

The completion $\widehat{C}$ of a coalgebra is a filtered and complete vector spaces over a field $K$, i.e. there is an increasing finite filtration $F_{\hdotc}$ on $\widehat{C}$ such that quotient are finite dimensional and $\widehat{C}= \mathrm {lim}_{\leftarrow}C/F_n(C).$ By the abuse of notation we denote by $\otimes$ the tensor product of complete vector spaces $V$ and $W$:
$$V\otimes W:=\mathrm {lim}_{\leftarrow} (V/F_n(V))\otimes (W/F_m(W)).$$ With respect to this product, the completion $\widehat C$ is a coalgebra. The category of complete filtered $K$-coalgebras will be denoted by $\textsf {CoAlg}_{K}. $ This is a monoidal category with respect to a completed tensor product ${\otimes}$ over $K.$

\smallskip 

A Hopf $K$-algebroid $\EuScript H$ is a groupoid enriched in the monoidal category $\textsf {CoAlg}_{K}$ such that for any objects $x$ and $y$ the underlying $K$-vector space:
$$
\mathrm {Mor}_{\EuScript H}(x,y)
$$
is a free left (resp. right) module over $\mathrm {Mor}_{\EuScript H}(x,x)$ (resp. over $\mathrm {Mor}_{\EuScript H}(y,y)$) of rank one.
\smallskip 

Hopf algebroids form a category in a natural way which we will denoted by $\textsf {HopfAlgbrd}_K.$ Note that a Hopf algebroid with one object is the same as a complete filtered Hopf algebra.

Let $\textsf {Grp}$ be a category of groupoids, i.e. objects are groupoids and morphisms are given by functors. Define the functor:
\begin{equation}\label{gf1}
\mathcal U\colon \textsf{Grp}\longrightarrow \textsf {HopfAlgbrd}_K,
\end{equation} 
known as the \textit{universal enveloping functor}.
\par\medskip 

Given a groupoid $\Gamma$, define the corresponding \textit{universal enveloping Hopf $K$-algebroid} $\mathcal U(\Gamma)\in \textsf {HopfAlgbrd}_K$ as follows: 
\begin{enumerate}[(a)]
\item $\mathrm {Ob}_{\mathcal U(\Gamma)}:=\mathrm {Ob}_{\Gamma}.$
\par\medskip 
\item For any $x,y\in \mathrm Ob_{\mathcal U(\Gamma)}$ we set:
\[
\mathrm {Mor}_{\mathcal U(\Gamma)}(x,y):=\widehat{\langle\mathrm {Mor}(x,y)\rangle_K,}
\]
where $\langle\mathrm {Mor}(x,y)\rangle_K$ is a free $K$-vector space generated by a set $\mathrm {Mor}(x,y)$ with a comultiplication $\Delta$ defined by declaring elements from $\mathrm {Mor}(x,y)$ to be group-like, i.e.
$$
\Delta(a)=a\otimes  a, \qquad a \in  \mathrm {Mor}(x,y).
$$
Define the counit $\varepsilon\colon \widehat{\langle\mathrm {Mor}(x,y)\rangle_K}\rightarrow K$ on generators by the rule $\varepsilon (a)=1.$ 
\end{enumerate}
\par\medskip 
The construction \eqref{gf1} defines  monoidal functor, with respect to a cartesian product of categories (note that category $\textsf {HopfAlgbrd}_K$ is also monoidal with respect to the cartesian product). 

Define the right adjoint functor to the functor \eqref{gf1}:
\begin{equation}\label{gf2}
(\,\,)^{\times}\colon \textsf {HopfAlgbrd}_K\longrightarrow \textsf{Grp},
\end{equation} 
which is called the \textit{functor of group like elements}.
\par\medskip 
For an object $\EuScript C\in \textsf {HopfAlgbrd}_K$ we define a \textit{groupoid $\EuScript C^{\times}\in \textsf {Grp}$ of group-like elements}:
\begin{enumerate}[(a)]\item $
\mathrm {Ob}_{\EuScript C^{\times}}:=\mathrm {Ob}_{\EuScript C}$
\par\medskip 
\item For any $x,y\in \mathrm {Ob}_{\EuScript C^{\times}}$ we set:
$$
\mathrm {Mor}_{\EuScript C^{\times}}(x,y):=\{a\in \mathrm {Mor}_{\EuScript C}(x,y)\,| \Delta(a)=a\otimes a\, \varepsilon(a)=1\}.
$$
\end{enumerate} 

Since the functor of group like elements (given in $\eqref{gf2}$) is a right adjoint to the monoidal functor in $\eqref{gf1}$, it follows that the functor of group like elements is lax-monoidal. 

\smallskip 

For a groupoid $\Gamma$, define the \textbf{Mal'cev completion $\widehat {\Gamma}(K)$ of a groupoid $\Gamma$ over a field $K$} by the rule:
 $$\widehat{\Gamma}(K):=\mathcal U({\Gamma})^{\times}.$$ 

Note that when $\Gamma$ is a group (i.e. a groupoid with one object) we recover the original construction of Mal'cev completion \cite{Mal}  \cite{Quill}\footnote{Sometimes also called the pro-unipotent completion.} Since the functor of the Mal'cev completion is defined as a composition of adjoint functors for each groupoid $\Gamma$ we have a canonical adjunction morphism $\Gamma\longrightarrow \widehat{\Gamma}(K).$ 

The image of the category $\textsf {Grp}$ under the Mal'cev completion functor is denoted by $\textsf {Grp}^{un}$ and called the category of \textit{pro-unipotent groupoids}. Note that since the functor $(\,\,)^{\times}$ is faithful the isomorphism of Mal'cev groupoids is given by the isomorphism of the underlying Hopf algebroids. We define colimits in the $\textsf {Grp}^{un}$ as follows. Given a system $\{\mathcal G_i\}_{i\in C}$ of pro-unipotent groupoids, put:
$$
\clim_{i\in C} \mathcal G_i:=(\clim_{i\in C} \mathcal A_i)^{\times},
$$
where $\mathcal A_i$ are the underlying Hopf algebroids. The construction makes the Mal'cev completion functor $\Gamma \longmapsto \widehat {\Gamma}$ into the colimit preserving functor. 

\begin{Def} 
Let $X$ be a topological space with a subspace $A\subset X.$ We define the \textbf{Betti fundamental groupoid} $\Pi_1^B(X,A)$ as:
$$
\Pi_1^B(X,A):=\widehat{\Pi_1(X,A)}(K).
$$
\end{Def} 

In particular for each $Y_i\in \Pi_1^B(X,A)$ we have that $\mathrm{Mor}_{\Pi_1^B(X,A)}(A_i,A_j)$ is continuous dual to the algebra of function on pro-variety which coincides with the Betti fundamental torseur of paths $\pi_1(X,y_i,y_j)$ where $y_i\in A_i$ and $y_j\in A_j$~\cite{DelF}.   

\smallskip 
\begin{remark}\label{rem1}
Let $X$ be a topological space with a set of base points $A$, covered by spaces $X_1$ and $X_2,$ (see Sec.\ref{S:2.1}). Applying the construction of pushouts in the category of pro-unipotent groupoids, we have a version of Seifert van Kampen Theorem, for Betti groupoids. So, the pro-unipotent version of equation~\eqref{E:Seifert van Kampen} satisfies the following:
\begin{equation}
\Pi_1^B(X_1,A_1)\star_{\Pi_1^B(X_0,A_0)} \Pi_1^B(X_2,A_2) \overset{\sim}{\longrightarrow} \Pi_1^B(X,A). 
\end{equation} 
Analogous to Poincaré groupoids \eqref{psh} we have a functorial property of Betti groupoids:
\[
f_{\bullet}\colon \Pi_1^B(X,A)\longrightarrow \Pi_1^B(Y,f(A)),
\]
where $f\colon X\longrightarrow Y$ is a morphism of topological spaces such that each connected component of $f(A)$ is simply connected. 
\end{remark}

\section{Configuration spaces, Harmonic webs and towers}\label{S:3}
Gauss’s first proof of the Fundamental Theorem of Algebra lead in recent years to the construction of a stratification of the configuration configurations spaces $UConf_{n}(\C)$ of $n$ unordered points on the complex plane investigated in
 \cite{MaSaSi,Ber,Ghys,ACam} and developed in~\cite{Combe0} in the aim of giving a new, rigorous construction for the computation of the \v Cech cohomology of braid groups with values in any sheaf. We call this decomposition a Gauss decomposition. 
 
\cite{Combe0,Combe1, Combe2} shows that this stratification offers a refinement of the Fulton--MacPherson decomposition~\cite{FM1}, (it is a Goresky--MacPherson startification with more strata than the standard Fulton--MacPherson decomposition) and that the stratification is invariant under the symmetries of Coxeter group of dihedral type. We  use these properties to prove our main statement.  

\smallskip 

\subsection{Harmonic webs}\label{S:3.1}
We prepare the ground for the proof of the main theorem (Th. \ref{Th1}) given in Sec.~\ref{S:4}, by recalling the properties of the Gauss stratification.

The idea behind this decomposition is that one identifies the configuration space $UConf_{n}(\C)$ to the space of  complex univariate monic polynomials of degree $n$ (where each root of the polynomials corresponds to a marked point of $\C$) and stratifies $UConf_{n}(\C)$ by using different classes of isotopic diagrams, produced by the level curves of the harmonic curves given by the real and imaginary parts of the polynomial. 

Somehow, given the tight relation between the space of complex univariate monic polynomials of degree $n$ and Saito's Frobenius manifolds (\cite{Man}, Chap.1, Def. 1.3. and Sec. 4.5) it is worth considering this stratification through the perspective of web theory. The reason for this being that webs, attached to a Frobenius manifold, enjoy many good properties (\cite{AkSh} Appendix A.1, \cite{CoMaMar2}, Sec. 4) such as vanishing curvature and parallelisability (\cite{Go} Sec.1.4--1.5).  

E. Cartan~\cite{Cartan} initially introduced by webs, as a movable frame method, so as to outline local analytic invariants on given manifolds. 
\begin{Def}[\cite{Go}] Given an open domain $D$ of a differentiable manifold of dimension $nr$, a $d$-web consists of 
(a number of) $d$ foliations which are of codimension $r$.
\end{Def}

\begin{Def}
We will designate by {\it harmonic webs} the level curves given by the real and imaginary parts of a complex univariate monic polynomial. 
\end{Def}
In the rest of this paper, we shall focus on $2n$-webs of codimension 1 on a open domain of $\mathbb{R}^2$ being harmonic webs.  

\smallskip 

\subsubsection{Harmonic polynomials}\label{S:3.2}
Consider $UConf_n(\C)$. It is easy to step towards the space of complex monic univariate polynomials by identifying each $n$-tuple of marked points $(x_1,\cdots, x_n)\in \mathbb{C}^n$ to the set of $n$ roots of a given degree $n$ polynomial $P\in \mathbb{C}[x]$ such that  $P(x_i)=0$ for $i\in\{1,\cdots, n\}.$

Now any complex polynomial $P$ can be written as the sum of its real part $\Re P$ and its imaginary part $\Im P$, so that we have: $P=\Re P(x,y)+\imath \Im P(x,y)$, where $\Re P(x,y)$ and $\Im P(x,y)$ are both degree $n$ harmonic polynomials in two real variables.
These form a pair of harmonic polynomials. 

Harnack's methods of perturbation of real curves and later developments in \cite{Santos,Viro} have lead to a theory of  {\it dissipation} or {\it perturbation} of real algebraic curves, which is used in \cite{Combe0,Combe1,Combe2,Combe3}. The idea, 
 related to the desingularisation process of a real algebraic curve, is to algebraically perturb a given (algebraic) curve in order to desingularise some of its singular points (and vice-versa, one can singularise back the curve). 

Essentially,  here we consider a set  $\cX_n$ of harmonic real polynomials in two variables (being the real or imaginary part of a complex polynomial $P$) (discussed in Prop. \ref{P:3.1.5}) and apply to there the theory of (real) curve deformation which is obtained from algebraic perturbation of the coefficients of polynomials (see \cite{Viro}, Sec. 3.3, p.1089 for details about dissipation).

\subsubsection{Webs and chord diagrams}\label{S:3.3}
 Let $P$ be a complex univariate monic polynomial of degree $n$.
Using the decomposition of $P=\Re P(x,y)+\imath \Im P(x,y)$, we obtain webs by using the {\it level sets} of harmonic polynomials i.e. given by the algebraic equations $\Re P(x,y)=0$ and $\Im P(x,y)=0.$ 

\begin{remark} In the context of the affine space $\mathbb{R}^2$ or $\mathbb{C}$ the (level set) curves do not form ovals. However, on the projective line $\mathbb{P}_{\C}$ the situation is different and one has only ovals. An important property of the level sets of harmonic polynomials on   $\mathbb{R}^2$  is that they are directly connected to planar graphs: forests (i.e. graphs with no cycles). 
\end{remark}
\begin{Def}
We call harmonic web the codimension 1 foliations in $\mathbb{R}^2$ obtained from the level sets of the degree $n$ algebraic equations $\Re P(x,y)=0$ and $\Im P(x,y)=0.$ 
\end{Def}

Each of these algebraic equations give $n$ foliations of codimension 1 (i.e. real one dimensional curves properly embedded) in $\mathbb{R}^2$. So, we have a pair of superimposed $n$ webs in an open domain of $\mathbb{R}^2$. 

The configurations of the foliations in the webs present different ``geometries''. Therefore, we visualise them under the shape of a chord diagram in a given disc: the chords of the diagrams corresponding to the foliations given by the webs. We develop this  aspect below. 

\smallskip 

A tree is a kind of graph, being a finite connected contractible 1-complex with at least one edge. The vertices of the graph are split into two kinds:
\begin{itemize}
\item[-] those having valency one (i.e. one incident edge) are called {\it leaves}; \\
\item[-] the other vertices are called {\it inner nodes}. 
\end{itemize}

\smallskip 

A disjoint union of trees forms a forest. Now, an embedded forest is a subset of the plane which is the image of a proper embedding of a forest minus leaves to the plane. 

Such forests are visualised in the shape of a {\it chord diagram} in a given disc. We obtain it by embedding a forest in the closed disc such that leaves are mapped to the boundary circle and the rest of the forest are mapped into the open disc. 

\cite{E} established the relation from forests to harmonic polynomials. To any level set of a real harmonic polynomial $u$ i.e. $\{z\in\, \C:\, u(z)=0 \}$ corresponds an embedded graph, forming a forest.  Also, since the asymptotic directions of the level curves are given by the $2n$ roots of unit, we place the leaves at the roots of unity of degree $2n$ on the boundary of the disc. 

\begin{Prop}\label{P:3.1.5}
Any harmonic web, attached to a degree $n$ $\mathbb{C}$-polynomial, corresponds to a chord diagram such that:
\begin{itemize}
\item[-] chords do not form any cycle, \\
\item[-] there exist $4n$ leaves,\\
\item[-] inner nodes must be of even valency \\
\item[-] inner nodes corresponding to the zeros of the corresponding polynomial are of valency 4.
\end{itemize}
\end{Prop}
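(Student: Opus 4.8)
The plan is to compress both level sets into the zero locus of a single harmonic polynomial and then read off the four properties from the classical local and global structure of zero sets of harmonic functions. Writing $P=\Re P+\imath\,\Im P$, I would introduce
\[
H:=\Im(P^2)=2\,\Re P\cdot\Im P ,
\]
which is harmonic (being the imaginary part of the holomorphic polynomial $P^2$) and of degree $2n$. Since $H$ vanishes exactly where $\Re P=0$ or $\Im P=0$, its zero set $\{H=0\}$ is precisely the harmonic web, i.e. the union of the two superimposed sub-webs of Section~\ref{S:3.3}. The whole argument then reduces to analysing the single curve $\{H=0\}$ together with the disc embedding recalled there.

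First I would establish the absence of cycles. As $H$ is a nonconstant polynomial (hence real-analytic) harmonic function, if some chords bounded a region $\Omega$ then $H$ would vanish on $\partial\Omega$, and the maximum principle would force $H\equiv 0$ on $\Omega$, whence $H\equiv 0$ everywhere by analyticity — contradicting $\deg H=2n\ge 2$. Thus $\{H=0\}$ is a forest. This step genuinely needs the combined function $H$: the two sub-webs are individually forests by applying the same principle to $\Re P$ and $\Im P$, but a priori their union could close up a loop, and passing to $\{H=0\}$ rules this out at once. For the leaves, the monic leading term gives $P(z)\sim z^n$ and hence $H\sim\Im(z^{2n})=r^{2n}\sin(2n\theta)$ for large $r$, so the curve has exactly $4n$ asymptotic rays in the directions $\theta=k\pi/(2n)$; each end is mapped to one boundary point of the disc, producing the $4n$ leaves (equivalently $2n$ from $\{\Re P=0\}$ interleaved with $2n$ from $\{\Im P=0\}$).

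For the valencies I would invoke the local normal form of a harmonic zero set. The singular points of $\{H=0\}$ are isolated, since they lie among the zeros of $\partial H/\partial z=-\imath\,P P'$, a finite set. At such a point $H$ has no constant and no linear Taylor term, so its lowest-order part is a nonzero homogeneous harmonic polynomial of some degree $m\ge 2$, which up to rotation and scaling is $\Re(\zeta^m)=r^m\cos(m\theta)$; this has $2m$ simple (transverse) zeros on the circle, and the higher-order terms do not change the local topology, so the node is a $2m$-pod of even valency. Finally, for the nodes at the roots of $P$: distinctness of the $n$ marked points means $P$ has $n$ simple roots, and at a root $z_0$ one has $P(z)=c(z-z_0)+O((z-z_0)^2)$ with $c\neq 0$, so $P^2$ and hence $H$ vanish to order exactly $2$ with harmonic leading part of degree $2$; the normal form then gives $2\cdot 2=4$ rays, i.e. valency $4$. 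Concretely these are exactly the transverse crossings of the two sub-webs, occurring precisely at the common zeros $\{\Re P=0\}\cap\{\Im P=0\}=\{P=0\}$, where the gradients of the conjugate harmonics $\Re P$ and $\Im P$ are nonzero and orthogonal.

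The main obstacle, and the only genuinely technical ingredient, is the local normal form underpinning the valency computations: one must justify that near an isolated critical point lying on $\{H=0\}$ the curve really is the $2m$-pod determined by the leading homogeneous harmonic part, with higher-order terms negligible for the local topology. This is where distinctness of the marked points enters decisively, forcing simple roots, hence $m=2$, and pinning the valency at the zeros of $P$ down to exactly $4$; without it, a root of multiplicity $k$ would give valency $4k$.
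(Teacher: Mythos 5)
Your proof is correct, and it takes a genuinely different route from the paper. The paper's proof of Prop.~\ref{P:3.1.5} is essentially a citation: it asserts that forests arising from level sets of polynomials in $\cX_n$ have $2n$ leaves and even-valency inner nodes, appeals to Theorem 1.3 of \cite{E} for the converse, and to a construction of Belyi \cite{Bel} for the generic case; the $4n$-leaf count and the valency-$4$ claim at the roots of $P$ are then only implicit, coming from superimposing the two sub-webs. You instead give a self-contained analytic argument by encoding the whole web as the zero set of the single degree-$2n$ harmonic polynomial $H=\Im(P^2)=2\,\Re P\cdot\Im P$: the maximum principle yields acyclicity, the leading term $\Im(z^{2n})$ yields the $4n$ ends, and the local structure of harmonic zero sets (locally $g^{-1}(\R)$ for a holomorphic $g$ with a zero of order $m$, hence $2m$ rays) yields even valencies in general and valency exactly $4$ at simple roots. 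Your route buys three things the paper's sketch leaves unaddressed: acyclicity of the \emph{union} of the two sub-webs, which does not follow formally from acyclicity of each one separately; a proof of the $4n$ and valency-$4$ statements as actually stated for the web, rather than for a single harmonic polynomial; and a precise localization of where distinctness of the marked points enters (simple roots force valency $4$, while a root of multiplicity $k$ would give $4k$). The paper's route buys brevity and keeps the link to \cite{E} and Belyi's theory of dessins, on which the surrounding combinatorial discussion (the bijection with $\fF_n$ and the indexing of strata) relies. The one step you rightly flag as technical --- that higher-order terms do not alter the local topology of the $2m$-pod --- is standard and follows from your own holomorphic reformulation, so it is not a gap.
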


\begin{proof}
 Let us denote $\cX_n$ the space of all degree $n$ real harmonic polynomials defined on the complex plane. It can be shown that forests corresponding to levels set of harmonic polynomials in $\cX_n$ have $2n$ leaves and that their inner nodes have even valencies. 
Now, by Th 1.3~\cite{E}, the reciprocal statement holds too. A proof of the generic case can be done using a construction of Belyi \cite{Bel}. We denote the set of forests corresponding to the degree $n$ polynomial harmonic polynomials $\fF_n$.
\end{proof}

The bijection between $\fF_n$ and  harmonic polynomials in $\cX_n$ is important to the construction of our stratification. Each connected component of the stratification is indexed by an equivalence class of embedded graphs in $\fF_n$. This equivalence class is defined naturally as follows. We call subsets $F_1$ and $F_2$ of homeomorphic topological spaces $U_1$ and $U_2$ equivalent if there is a homeomorphism $h: U_1 \to U_2$ such that $h(F_1) = F_2$ (the ambient spaces $X_1$ and $X_2$ are clear here from context). 

We call {\it generic} the level curves of a polynomial in $\cX_n$ being a disjoint union of $n$ curves (or trees with one unique edge and 2 leaves in its boundary, if we take the chord diagram presentation) properly embedded in the plane. 
The generic strata of degree $n$ polynomials are indexed by generic graphs. Those graphs are given by the union of $n$ trees having one edge each. 

To summarise: we can stratify the configuration space $UConf_{n}(\mathbb{C})$
in such a way that each stratum $A_{\sigma}$ is a connected component of $UConf_{n}(\mathbb{C})$ corresponding to polynomials having equivalent webs and which are indexed by the same chord diagram $\sigma$. 

\cite{Combe1}(Main Theorem) shows that this forms a Goresky--MacPherson stratification and even a good \v Cech cover.
\begin{Th}[~\cite{Combe1}] Let $A_{\sigma}$ be a generic stratum (i.e. of codimension 0). Then, the topological closure $A_{\sigma}$ defines a Goresky--MacPherson stratification.
\end{Th}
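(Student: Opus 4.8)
The plan is to verify that the Gauss decomposition $\{A_\sigma\}$ of $UConf_n(\C)$, indexed by the equivalence classes of chord diagrams in $\fF_n$, satisfies the defining axioms of a Goresky--MacPherson stratification: local finiteness, that each stratum is a locally closed semialgebraic submanifold, and the frontier condition $\overline{A_\sigma}\cap A_\tau\neq\emptyset \Rightarrow A_\tau\subseteq\overline{A_\sigma}$. Since a generic stratum has codimension $0$, it is open, and the closure $\overline{A_\sigma}$ is a union of strata carrying the induced decomposition; thus establishing these axioms for the full decomposition yields the statement for each $\overline{A_\sigma}$. Local finiteness is immediate because $\fF_n$ is finite for fixed $n$.

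First I would fix the algebraic framework. Using the identification of $UConf_n(\C)$ with the space of monic degree $n$ complex polynomials $P$, coordinatised by the coefficients, I write $P=\Re P+\imath\,\Im P$ and record the harmonic web as the pair of real algebraic curves $\{\Re P=0\}$ and $\{\Im P=0\}$ in $\R^2$, whose combinatorial type is the chord diagram $\sigma$ of Prop.~\ref{P:3.1.5}. The stratum $A_\sigma$ is the set of polynomials realising $\sigma$. Because the data fixing $\sigma$ (the incidences and valencies of the inner nodes, together with the pattern of leaves placed at the roots of unity of degree $2n$ on the boundary circle) are cut out by real polynomial equalities and inequalities in the coefficients of $P$, each $A_\sigma$ is semialgebraic. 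I would then check that it is a smooth submanifold by verifying transversality of the incidence conditions: the codimension equals the number of independent coincidences among the critical values of $\Re P$ and $\Im P$ forced by $\sigma$, so the generic diagrams — those imposing no coincidence beyond the $n$ marked points themselves — give codimension $0$.

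The heart of the argument is the frontier condition, which I would establish through the perturbation/dissipation theory of \cite{Viro,Santos}. Degenerating a polynomial within $A_\sigma$ towards a boundary point corresponds to colliding inner nodes of the web, which merges chords and raises valencies, producing a diagram $\tau$ obtained from $\sigma$ by a sequence of admissible contractions; conversely, the dissipation construction shows that every such $\tau$ is attained in the limit and, crucially, that any polynomial of type $\tau$ can be perturbed \emph{back} into type $\sigma$. This two-way realisability equips $\fF_n$ with a partial order under which $A_\tau\subseteq\overline{A_\sigma}$ precisely when $\tau\leq\sigma$, so that the closure of each stratum is exactly the union of the strata indexed by its degenerations.

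The hard part will be to make this limit analysis complete and uniform near the boundary. Concretely, I must rule out any \emph{spurious} limit web in $\overline{A_\sigma}$ that is not one of the admissible contractions $\tau$, and I must show that the perturbation recovering $\sigma$ can be carried out \emph{simultaneously and continuously} over a full neighbourhood of each boundary point. This amounts to producing a semialgebraic local model (a local trivialisation) of the web near a collision of inner nodes, controlled by the asymptotic directions rigidly fixed at the roots of unity on the boundary. Once such local triviality is in hand, the Whitney conditions for the decomposition follow from the standard existence theorems for semialgebraic stratifications, so that the Gauss decomposition is not merely a frontier-condition stratification but a genuine Goresky--MacPherson (indeed Whitney) one, as claimed.
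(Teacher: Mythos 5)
The paper never proves this statement: it is imported verbatim from \cite{Combe1}, whose Main Theorem it is, and no argument for it appears anywhere in the present text. So your proposal cannot be matched against a proof in the paper; it can only be judged on its own terms and against the strategy of the cited work (semialgebraic stratification of the space of polynomials plus Viro-style dissipation), with which it is broadly consistent in outline.

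Judged on its own terms, however, the proposal is a plan rather than a proof, and the two places where it defers work are exactly where the content lies. First, the claim that each $A_\sigma$ is semialgebraic is asserted on the grounds that the diagram data are ``cut out by real polynomial equalities and inequalities in the coefficients of $P$.'' This is not immediate: $A_\sigma$ is defined as an \emph{isotopy class} of embedded forests (the level sets of $\Re P$ and $\Im P$), and isotopy of properly embedded planar curves is a priori a topological condition, not a semialgebraic one; converting it into finitely many polynomial (in)equalities in the coefficients --- via critical points, critical values and their incidence and ordering data --- is precisely the technical work that has to be done, and your transversality remark presupposes it rather than supplies it. Second, you explicitly flag the frontier condition (``rule out spurious limit webs,'' ``perturb back simultaneously and continuously over a full neighbourhood of each boundary point'') as the hard part and then do not carry it out; but that \emph{is} the theorem. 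The dissipation theory of \cite{Viro,Santos} produces individual perturbations of a single degenerate polynomial, which gives $A_\tau\cap\overline{A_\sigma}\neq\emptyset$ for admissible contractions $\tau$; it does not by itself give the uniform local model near a whole boundary stratum, nor the exclusion of non-admissible limits, which is what the Goresky--MacPherson (or Whitney) conditions require. As written, the proposal correctly identifies the skeleton of an argument --- semialgebraicity, a partial order on $\fF_n$ by degeneration, local triviality at collisions --- but proves none of its load-bearing steps.
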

Each stratum $A_{\sigma}$ is formed from a family of univariate $n$ rooted $\C$-polynomials indexed by one chord diagram $\sigma$. This chord diagram can be interpreted as an equivalence class of webs.

\subsection{Dihedral Coxeter decomposition}\label{S:Cox0}
Using the chord diagrams defined above has many advantages. One of them is that the Gauss decomposition turns out to have hidden Coxeter groups symmetries. Let us first introduce some terminology coming from Coxeter groups.

\smallskip 

\cite{Bki} defines a Coxeter system (called for simplicity a Coxeter group) as being given by the pair $(W,S)$, where:

--  $W$ is a group 

-- $S$ is a set of independent generators of $W$ such that $S=S^{-1}$. 

Every element of $W$ is the product of a finite sequence of elements of $S$. Furthermore, if one takes two generators $s,s'$ of $S$ and calls $m(s,s')$ the order of $ss'$ and let $I$ be the set of pairs $(s,s')$ such that $m(s,s')$ is finite. Then the generating set $S$ and the relations $(ss')^{m(s,s')}=1$ for $(s,s')\in I$ form a presentation of the group $W.$ 

A dihedral group of order $2n$ is a Coxeter group generated by two distinct elements $s, s'$ of order two such that $(ss')^{2n}=1$. The standard Coxeter group terminology of chambers requires that for any chamber $C$, the closure $\overline{C}$ of $C$ is a fundamental
domain for the action of $W$ on a real affine space of some finite dimension. The dihedral group $D_{n}$ acts on $R^2$ by rotating or reflecting an $n$-polygon. 

\medskip 

We now show that the configuration space of $n$ marked points on $\C$ carries an explicit action of a dihedral group $D_{2n}$.  
\begin{Prop}[F. Bergeron, Sec.4~\cite{Ber}, \cite{Combe0,Combe2}]\label{Prop:DAct}
The configuration $UConf_{n}(\C)$ carries the explicit action of a dihedral group $D_{4n}$. 
\end{Prop}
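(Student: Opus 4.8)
The plan is to prove Proposition~\ref{Prop:DAct} by exhibiting the action explicitly through generators and relations, using the identification of $UConf_n(\C)$ with the space of monic degree-$n$ polynomials with distinct roots. Writing a configuration as the unordered root set $\{x_1,\dots,x_n\}$ of $P(z)=\prod_k(z-x_k)$, I would introduce two transformations,
\[
r\cdot\{x_k\}=\{\,e^{\imath\pi/(2n)}x_k\,\},\qquad s\cdot\{x_k\}=\{\,\overline{x_k}\,\},
\]
induced respectively by a rotation of $\C$ by $\pi/(2n)$ and by complex conjugation. Both are homeomorphisms of $\C$, hence send $n$ distinct points to $n$ distinct points and preserve $UConf_n(\C)$; after renormalising the leading coefficient each image is again monic, so the action descends to configurations.

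Next I would verify the dihedral relations. Conjugation gives $s^2=\mathrm{id}$. Rotating the roots by $\pi/(2n)$ and iterating $4n$ times rotates by $2\pi$, so $r^{4n}=\mathrm{id}$; for a configuration with no nontrivial rotational stabiliser the equality $e^{\imath k\pi/(2n)}x_j=x_{\pi(j)}$ forces $4n\mid k$, so $r$ has order exactly $4n$ and the action is faithful. Finally $srs$ sends $x_k\mapsto\overline{x_k}\mapsto e^{\imath\pi/(2n)}\overline{x_k}\mapsto e^{-\imath\pi/(2n)}x_k$, whence $srs=r^{-1}$. Thus $\langle r,s\rangle$ is dihedral of order $8n$, the symmetry group of the regular $4n$-gon whose vertices are the $4n$ boundary leaves, i.e. $D_{4n}$ in the convention fixed above.

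The heart of the argument is to match this analytic action with the combinatorial symmetry of the chord diagrams of Prop.~\ref{P:3.1.5}. The key computation is that the half-step rotation interchanges the two harmonic families: setting $Q(w)=\prod_k(w-e^{\imath\pi/(2n)}x_k)$, one finds $Q(e^{\imath\pi/(2n)}z)=\imath\,P(z)$, so that $\{\Re Q=0\}$ is the rotated image of $\{\Im P=0\}$ and $\{\Im Q=0\}$ that of $\{\Re P=0\}$. Consequently $r$ carries the harmonic web of $P$ onto the harmonic web of $Q$, cyclically shifting the $4n$ leaves placed at the $4n$-th roots of unity by one step, while $s$ acts on them as a reflection. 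Transporting this through the bijection of Prop.~\ref{P:3.1.5} between harmonic webs and chord diagrams realises $\langle r,s\rangle$ as the full dihedral symmetry of the boundary leaves, so the action permutes the equivalence classes of chord diagrams and hence the strata $A_\sigma$ of the Gauss decomposition.

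The main obstacle I anticipate is the factor of two. The naive rotation $z\mapsto e^{\imath\pi/n}z$ preserves each family $\{\Re P=0\}$ and $\{\Im P=0\}$ separately and yields only a $\Z/2n$-symmetry. The doubling to $D_{4n}$ comes precisely from the half-step rotation by $\pi/(2n)$, which fixes neither family but swaps them; the delicate point is to check that this half-step is nonetheless a genuine symmetry of the \emph{superimposed} web and that it permutes the $4n$ leaves rather than collapsing them. This is exactly what the identity $Q(e^{\imath\pi/(2n)}z)=\imath\,P(z)$ secures, so once that identity is established the remaining verifications are routine.
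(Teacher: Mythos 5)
Your proposal is correct and is essentially the paper's own proof: the paper's generator $\mathfrak{s}$, given by $P(z)\mapsto \imath P(e^{-\imath\pi/2n}z)$, acts on root sets exactly as your $r$ (your identity $Q(e^{\imath\pi/(2n)}z)=\imath P(z)$ is the same computation, just inverted), and the paper's reflection $\mathfrak{t}$ (coefficient conjugation) is your $s$, with the same matching of the web/chord-diagram symmetry afterwards. The only divergence is bookkeeping: the paper asserts that $\langle\mathfrak{s},\mathfrak{t}\rangle$ has order $4n$, whereas your count of $8n$ (the full symmetry group of the $4n$-gon of leaves) is what these generators actually produce --- the rotation alone already has order $4n$ --- so your version is the internally consistent one, and the factor-of-two slippage lies in the paper's own fluctuating use of the symbol $D_{4n}$.
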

Two different proofs of this statement are given in Sec.4~\cite{Ber} and \cite{Combe0}.
We recall the one from~\cite{Ber}.  
\begin{proof}
The configuration space $UConf_{n}(\C)$ can be identified to the space $\cP_n$ of complex polynomials of degree $n$, being unitary.
Note that $\cP_n$  differs from $\cX_n$ in that it is not a real harmonic polynomial.

Let $P\in \cP_n$ be a polynomial. Then, define the dihedral action as follows.
For $P\in \cP_n$, we have 
\[\imath P(\exp^{-\imath\pi/2n}z)\in \cP_n,\]
which exchanges the roles of the real and imaginary part of $P$. 
Given $\frak{s}$ the rotation of a Gauss chord diagram by an angle of $\pi/2n$, one has 
\begin{equation}\label{E:sigma}
\Re(\imath P(\exp^{-\imath\pi/2n}z)=\frak{s}(\Im P), \quad
\Im(\imath P(\exp^{-\imath\pi/2n}z)= \frak{s}(\Re P).
\end{equation} 

Moreover, there is also a reflection operation $\frak{t}$ given by:
\begin{equation}\label{E:tau}
\Re(\overline{P}(\overline{z}))=\frak{t}(\Re(P)),\quad \Im(\overline{P}(\overline{z}))=\frak{t}(\Im(P)).
\end{equation}

The maps $\frak{t}, \frak{s}$ generate the dihedral group and we have $D_{4n}=\langle \frak{t}, \frak{s}\rangle$ of symmetries acting on $\cP_n$ of order $4n$.
\end{proof}

We show that there exists a dihedral group $D_{2n}$ acting on the set of forests in $\fF_n$. This highlights an intrinsic symmetry of the space of harmonic polynomials $\cX_n$, implying that the zero locus of polynomials lying in $\cX_n$ has invariance under dihedral symmetry. 

\begin{Prop}
There exists an action of dihedral group $D_{2n}$ on the zero locus of the polynomials of $\cX_n$.
\end{Prop}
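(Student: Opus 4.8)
The plan is to realize the desired $D_{2n}$ as the subgroup of the group $D_{4n}=\langle \mathfrak{s},\mathfrak{t}\rangle$ of Proposition~\ref{Prop:DAct} that preserves the \emph{real‑harmonic} datum instead of interchanging it with the imaginary one. First I would fix the normalization of $\cX_n$ so that every $u\in\cX_n$ has leading homogeneous part $\Re(z^n)$; under this normalization the zero locus $Z(u)=\{u=0\}$ acquires exactly the $2n$ asymptotic leaves located at the zeros of $\cos(n\theta)$ (the odd multiples of $\pi/2n$), matching the leaves of the forests of $\fF_n$ in Proposition~\ref{P:3.1.5}. I would also keep track of the \emph{coorientation} of the web, i.e. the $2$-colouring of $\R^2\setminus Z(u)$ by the sign of $u$, since this is part of the forest datum in $\fF_n$.

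Next I would exhibit two generators acting on $\cX_n$, and hence on zero loci via the bijection $\cX_n\cong\fF_n$. For the rotation take $\rho\colon u(z)\mapsto u(e^{-2\pi\imath/n}z)$: since $e^{-2\pi\imath}=1$ its leading part $\Re(z^n)$ is fixed, so $\rho$ sends $\cX_n$ to itself and $Z(u)$ to its rotate, and it visibly has order $n$. For the reflection take $\tau\colon u(z)\mapsto u(\bar z)$, which fixes $\Re(z^n)$ because $\Re(\overline{z}^{\,n})=\Re(z^n)$; by \eqref{E:tau} this is the restriction of $\mathfrak{t}$, and $\tau^2=\mathrm{id}$. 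A one‑line computation gives $\tau\rho\tau^{-1}=\rho^{-1}$, so $\langle\rho,\tau\rangle$ is dihedral of order $2n$, i.e. $D_{2n}$, acting on the zero loci of $\cX_n$ as claimed.

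I would then reconcile this with Proposition~\ref{Prop:DAct}: by \eqref{E:sigma} the generator $\mathfrak{s}$ interchanges the real‑part and imaginary‑part webs, so only its even powers stabilize the real family; thus $\langle\rho,\tau\rangle$ is precisely the index‑two subgroup $\langle\mathfrak{s}^2,\mathfrak{t}\rangle$ of $D_{4n}$ fixing $\cX_n$, while the remaining coset $\mathfrak{s}\langle\mathfrak{s}^2,\mathfrak{t}\rangle$ performs the $\Re\leftrightarrow\Im$ exchange. This identifies the passage from order $4n$ on $\cP_n$ to order $2n$ on $\cX_n$ with exactly the forgetting of the real/imaginary distinction.

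The step I expect to be the main obstacle is pinning down that the rotation has order precisely $n$ rather than $2n$. The naive rotational symmetry of the $2n$ leaves is by $\pi/n$, but rotation by $\pi/n$ sends the leading part to $\Re(e^{-\imath\pi}z^n)=-\Re(z^n)$, flipping the sign of $u$ and therefore reversing the coorientation of the web (it swaps the regions $u>0$ and $u<0$ while preserving the underlying zero \emph{set}). Hence the half‑rotation is a symmetry of the bare set $Z(u)$ but \emph{not} of the cooriented forest in $\fF_n$; only the full rotation by $2\pi/n$ is. Making this rigorous requires care with the normalization of $\cX_n$, with the web‑equivalence (the dissipation/perturbation relation) used to define the strata, and with matching $\rho$ and $\tau$ to the generators of Proposition~\ref{Prop:DAct}, so that the action is well defined on equivalence classes and the order count is unambiguous.
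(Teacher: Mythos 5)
Your two generators are sound: $\rho\colon u(z)\mapsto u(e^{-2\pi\imath/n}z)$ and $\tau\colon u(z)\mapsto u(\bar z)$ do preserve the normalized family $\cX_n$, satisfy $\tau^2=\mathrm{id}$ and $\tau\rho\tau^{-1}=\rho^{-1}$, and hence generate a dihedral group of order $2n$ acting on the set of zero loci. This is also a genuinely different route from the paper's: the paper never writes down transformations of polynomials at all, but instead passes through the bijection of Proposition~\ref{P:3.1.5} between zero loci and the forests $\fF_n$, draws each forest as a chord diagram with its $2n$ leaves at roots of unity, identifies the decorated disc with a regular $2n$-gon, and invokes the dihedral symmetry of that polygon acting on decorated $2n$-gons. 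Your argument is more explicit and self-contained; the paper's is purely combinatorial.

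There are, however, two linked problems. First, your reconciliation with Proposition~\ref{Prop:DAct} is false. By \eqref{E:sigma}, $\mathfrak{s}^2$ sends $P\mapsto -P(e^{-\imath\pi/n}z)$, so on real parts it acts by $u\mapsto -u(e^{-\imath\pi/n}z)$; this \emph{does} preserve your normalization (the leading part is $-\Re(e^{-\imath\pi}z^n)=\Re(z^n)$), it has order $2n$, and $(\mathfrak{s}^2)^2=\rho$. Hence $\rho=\mathfrak{s}^4$, and $\langle\rho,\tau\rangle=\langle\mathfrak{s}^4,\mathfrak{t}\rangle$ is an index-two subgroup of $\langle\mathfrak{s}^2,\mathfrak{t}\rangle$, not equal to it; indeed your own final paragraph excludes the $\pi/n$-rotation-with-sign-flip from your group, contradicting the claimed equality. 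Second, the coorientation you import is not part of the paper's data: $\fF_n$ consists of bare embedded forests up to ambient homeomorphism, and the proposition concerns bare zero sets, on which the sign of $u$ is invisible; so $\mathfrak{s}^2$, i.e.\ rotation by $\pi/n$, genuinely acts there. The paper's polygon argument accordingly produces the dihedral group whose rotation part has order $2n$, which is what its notation $D_{2n}$ means, consistently with its $D_{4n}=\langle\mathfrak{s},\mathfrak{t}\rangle$ where $\mathfrak{s}$ has order $4n$. Your group is the index-two subgroup of that one — the paper's ``$D_n$''. It still furnishes a dihedral action of order $2n$, so the existence claim survives if $D_{2n}$ is read as the abstract dihedral group with $2n$ elements, but it is not the group the paper constructs, and the discrepancy stems precisely from the coorientation datum the paper does not use.
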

\begin{proof}
To prove this we first show that there is a dihedral symmetry action on the graphs of $\fF_n$, these graphs being in bijection with the zero locus of polynomials in $\cX_n$, the conclusion follows naturally. We use the chord diagram visualisation. Since the $2n$ leaves of the forests lie on the roots of unity, it is possible to identify this decorated disc to a regular $2n$-gon. This polygon is itself partitioned into smaller polygons, where boundaries are delimited by the arrangement of chords, defining the forest, and by the edges of the polygon. Since we know that $n$-gons have a dihedral symmetry (i.we. $D_{2n}$ acts on $\R^2$ by rotating or reflecting an $n$-polygon) it follows that we have a dihedral group $D_{2n}$ acting on the decorated $2n$-gons. Forgetting the polygon analogy, we can thus state that there exists a dihedral action on the chord diagrams representing $\fF_n$ and thus that this statement holds for the zero locus of the polynomials of $\cX_n$. \end{proof}

\smallskip 

We equip our topological space $UConf_n(\C)$ with a real structure. 
This is possible if and only if there exists an anti­holomorphic involution on $UConf_n(\C)$, the set of complex points of the configuration space.

\begin{Def}
Let $A$ be a set and let $H$ be a group operating on $A$. We will denote $A^H$ the set of fixed points of $A$ under the action of $H$. 
Supposing that $H = Gal(\C | \R)$ (where $Gal(\C | \R)$ is the Galois group) we have $Conf_n(\R)=Conf_{n}(\C)^{Gal(\C | \R)}$.
\end{Def}

\begin{remark}For the statement of the principal theorem, we take as the set of base points $A$  the set $UConf_n(\R)$, the configuration space of points on the real line. Note that this is $G$ invariant, when one uses the chord diagram description, where here $G$ is the dihedral group of order $4n$. 
\end{remark}

\begin{Lem}\label{L:G-inv}
Let $X=UConf_n(\C)$ be the configuration space of $n$ (distinct) marked points on the complex line.  
Consider the set of base points $A\subset X$, which is given by the space of $n$ (distinct) marked points on the real line. Then $A$ forms a $G$-invariant set, where $G$ is the dihedral group $D_{4n}$. 
\end{Lem}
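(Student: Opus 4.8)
The plan is to establish $G$-invariance of $A=UConf_n(\R)$ by checking it on the two generators of $G=D_{4n}=\langle\mathfrak{s},\mathfrak{t}\rangle$ furnished by Proposition~\ref{Prop:DAct}, working throughout in the chord-diagram description: a configuration in $UConf_n(\C)\cong\cP_n$ is recorded only by the combinatorial type of its harmonic web $\{\Re P=0\}\cup\{\Im P=0\}$. First I would re-express the defining condition of $A$ in this language. A point of $A$ is a monic $P$ with $n$ distinct real roots, equivalently a $P$ with real coefficients that splits completely over $\R$; such a $P$ satisfies $P(\bar z)=\overline{P(z)}$, so that $\Re P$ is even and $\Im P$ is odd in the variable $y$, and in particular the real axis is contained in the level set $\{\Im P=0\}$. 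This yields an intrinsic, web-theoretic description of the chord diagrams indexing $A$, independent of the literal positions of the points.

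I would then dispose of the reflection generator. Since $\mathfrak{t}$ acts by $P\mapsto\overline{P}(\bar z)$, its effect on roots is complex conjugation, which fixes every real root; hence $\mathfrak{t}$ fixes each point of $A$, and a fortiori $\mathfrak{t}(A)=A$. At the level of webs this is exactly the content of \eqref{E:tau}: the harmonic web of a real configuration is symmetric under reflection in the real axis, so its chord diagram is $\mathfrak{t}$-stable.

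The substantive step, and the one I expect to be the main obstacle, is invariance under the rotation generator $\mathfrak{s}$. By \eqref{E:sigma} the map $\mathfrak{s}\colon P\mapsto\imath P(e^{-\imath\pi/2n}z)$ rotates the chord diagram by $\pi/2n$ while interchanging the two harmonic pencils $\Re P\leftrightarrow\Im P$. The plan is to show that this exchange-and-rotation preserves the class of real-type webs at the combinatorial level. Concretely, I would track the cyclic shift induced on the $4n$ leaves, which sit at the $4n$-th roots of unity and split into the $2n$ asymptotic directions of each pencil, and verify that the combinatorial conditions of Proposition~\ref{P:3.1.5} (no cycles, even inner valencies, valency-four nodes at the roots) together with the reflection symmetry recorded above are jointly carried to a diagram of the same type. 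Establishing that membership in $A$ is therefore detected by an $\mathfrak{s}$-stable combinatorial condition gives $\mathfrak{s}(A)=A$ in the chord-diagram description, and combining the two generators yields $g(A)=A$ for all $g\in G$.

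The delicate point on which I would concentrate is the leaf bookkeeping under $\mathfrak{s}$: because $\mathfrak{s}$ swaps the real and imaginary pencils, the real axis loses its distinguished role after a single application, so the argument cannot rely on the literal incidence of points with $\R$ but must instead show that the defining parity and adjacency pattern of a real-type diagram is invariant under the simultaneous half-step rotation and pencil exchange. Once this combinatorial invariance is in hand, the Lemma follows, and it is precisely the $G$-stability needed to invoke Corollary~\ref{C:Bro} in the proof of the main theorem.
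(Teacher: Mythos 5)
Your reduction to the two generators and your treatment of the reflection $\mathfrak{t}$ are sound and agree with the paper: $\mathfrak{t}$ acts by $P\mapsto\overline{P}(\overline{z})$, so it fixes every real-rooted polynomial and hence fixes $A$ pointwise. The gap is in the rotation step, and it is not one that better combinatorial bookkeeping can close. By Proposition~\ref{Prop:DAct} and \eqref{E:sigma}, $\mathfrak{s}$ acts on polynomials by $P(z)\mapsto \imath P(e^{-\imath\pi/2n}z)$, hence on root sets by multiplication by $e^{\imath\pi/2n}$: it rotates every configuration through the angle $\pi/2n$. Consequently $\mathfrak{s}$ maps $A=UConf_n(\R)$ onto the configurations supported on the rotated line $e^{\imath\pi/2n}\R$, so $\mathfrak{s}(A)\cap A=\emptyset$ for $n\geq 2$ (distinct points cannot all sit at the origin). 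The set-theoretic identity $\mathfrak{s}(A)=A$ that you aim for is therefore false, and the combinatorial strategy cannot rescue it: the uncolored chord-diagram type of a real configuration is indeed $\mathfrak{s}$-stable, but precisely for that reason it does not detect membership in $A$ --- the stratum it indexes also contains configurations on rotated lines --- so the step ``membership in $A$ is detected by an $\mathfrak{s}$-stable combinatorial condition'' is self-contradictory. This is exactly the dichotomy you flagged as the delicate point, but it is an obstruction, not a difficulty to be engineered around.

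The paper's own proof concedes this and works at a different level. It first pins down the single diagram class indexing $A$ using Gauss--Lucas and Dotson's theorem (all roots and all $n-1$ critical points are real and interlace, which fixes the pattern of the $n$ chords of one pencil and the $n-1$ chords of the other crossing the long real-axis chord) --- an ingredient absent from your proposal --- and then observes that this class is invariant under the reflection and under rotation by $\pi$, stating explicitly that any other rotation would ``give a polynomial with complex roots''. The invariance asserted in the Lemma is thus invariance of $A$ \emph{in the chord-diagram description} (as the remark preceding the Lemma says): the equivalence class of diagrams, being defined only up to ambient homeomorphism of the disc, absorbs both the rotation by $\pi/2n$ and the exchange of the two pencils, even though the point-set $A$ itself is not preserved. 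To repair your argument you must either prove this diagram-level statement, or else restrict to the honest set-wise stabilizer of $A$ in $D_{4n}$, namely the order-four subgroup generated by $\mathfrak{t}$ and $\mathfrak{s}^{2n}$ (rotation by $\pi$); as written, your rotation step would fail.
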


\begin{proof}
Consider the configuration space $UConf_{n}(\R)$ from the chord diagram point of view. This space is indexed by only one equivalence class of chord diagrams. We describe this equivalence class now. The $n$ marked points lie on the real line and correspond to the $n$ real roots of a degree $n$ polynomial. So, following the rules of construction of these diagrams, along the real line lies a   chord and there exist $n$ disjoint red curves intersecting transversally this   chord (only once). 

\smallskip 

By Gauss--Lucas' theorem, given a $\C$-polynomial $P(z)$, the zeroes of $P'(z)$ lie in the convex hull of the zeroes of $P(z)$ and moreover by a theorem of Dotson \cite{Dotson}, a polynomial $P$ with complex coefficients has Rolle’s property if, and only
if, its zeroes are collinear. This is exactly our case. So, applying these theorems, given real roots ordered as follows $x_1<x_2<\cdots <x_n$, the $n-1$ critical points are also real and lie within the intervals $(x_i,x_{i+1})$ for $i\in \{1,\cdots n\}$. 
This implies that each of the remaining $n-1$   chords intersect transversally the real line at those critical points. 
This type of chord diagram is invariant under reflection $\tau$ of $D_{4n}$ and the only allowed rotation is the one by $\pi$ (otherwise we obtain a polynomial with complex roots) since any point of $UConf_{n}(\R)$. However, the equivalence class of diagrams turns out to remain invariant under a rotation by $\pi$. So, we have that $A$ is $D_{4n}$ stable. 
\end{proof}

\smallskip 

\subsection{Coxeter chambers and galleries decomposition of $UConf_n(\C)$}\label{S:Cox}

Sec. \ref{S:Cox0} and \cite{Bki} (Chap V, section 4.3 and 4.4) introduce the setting for Coxeter chambers and galleries decomposition, on which we rely. 
Consider the finite reflection group $W$  in (a vector space) $V$, being is a finite subgroup of $GL_n(V)$.
It is generated by reflections, where by reflection of $V$ we mean an automorphism of order 2 whose set of fixed points form an hyperplane. The reflection representation of $W$ is obtained as follows.

Define a bilinear form on $V$ by $B(e_{s_i},e_{s_j}) = -\cos(\frac{\pi}{m_{i,j}})$, where $e_{s_i} ,e_{s_j}$ are the vectors of the canonical basis of $V$ and $m_{i,j}$ is the Coxeter matrix. The reflection on $V$ is given by: $\rho_s(x) = x - 2B(e_s,x)e_s$. The map $s \to \rho_s$ extends to an injective group morphism, 
$W \to GL_n(V)$, the reflection representation of $W$. The reflection hyperplane is called a mirror.

\smallskip 

Denote by $\mathrm{Mirr}_W$ the set of mirrors of $W$. The connected components of the set $V - \cup_{H\in \mathrm{Mirr}_W}H$ are the chambers of $W$. The group $W$ acts simply transitively on the set of chambers~\cite{Bki} (section V. 3 theorem 1.2).
The closure of a chamber is a fundamental domain for the action of $W$ on $V$. 

\begin{Def}
By gallery of length $n$ we intend a sequence of adjacent chambers,
\end{Def}

These properties allow to define a stratification of the configuration space, which is invariant under a Coxeter group (the dihedral group). 
This geometric aspect has been developed furthermore by the first author in~\cite{Combe3}, using a purely geometric construction. 
\begin{Th}[\cite{Combe1,Combe3}]\label{T:Combe}
There exists a topological stratification of the configuration space of $n$ marked points $UConf_n(\C)$, forming a decomposition of $UConf_n(\C)$ being invariant under a Coxeter--Weyl group of Dihedral type $D_{4n}$.
\end{Th}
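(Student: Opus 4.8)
The plan is to assemble three ingredients already prepared in Section~\ref{S:3}: the existence of the Gauss stratification, the explicit dihedral action of $D_{4n}$ on $UConf_n(\C)$, and the compatibility of one with the other. Concretely, the statement decomposes into (i) exhibiting a topological stratification $UConf_n(\C)=\bigsqcup_\sigma A_\sigma$, and (ii) showing that the group $D_{4n}$ acts on $UConf_n(\C)$ as a Coxeter group of dihedral type that permutes the pieces $A_\sigma$.

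For (i), I would recall from Prop.~\ref{P:3.1.5} together with the Goresky--MacPherson theorem of \cite{Combe1} that the strata $A_\sigma$ are exactly the connected components of $UConf_n(\C)$ whose harmonic webs are topologically equivalent, indexed by the equivalence classes $\sigma$ of chord diagrams in $\fF_n$. This is already established to be a Goresky--MacPherson stratification, hence a topological stratification; no new work is needed here beyond citing it.

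For (ii), I would invoke Prop.~\ref{Prop:DAct}: the maps $\frak{s}$ and $\frak{t}$ generate a dihedral group $D_{4n}$ acting on $\cP_n\cong UConf_n(\C)$, and equations \eqref{E:sigma}--\eqref{E:tau} identify $\frak{s}$ with a rotation by $\pi/2n$ of the associated chord diagram (exchanging the roles of $\Re P$ and $\Im P$) and $\frak{t}$ with a reflection. The crux is then to show that these geometric operations descend to the equivalence classes, i.e. $\frak{s}(A_\sigma)=A_{\frak{s}\cdot\sigma}$ and $\frak{t}(A_\sigma)=A_{\frak{t}\cdot\sigma}$. Since $\frak{s}$ and $\frak{t}$ are homeomorphisms of $UConf_n(\C)$ and the web of $\frak{s}(P)$ (resp.\ $\frak{t}(P)$) is the rotated (resp.\ reflected) web of $P$, an ambient homeomorphism of the decorated disc realizing the rotation or reflection carries the forest of $P$ to the forest of its image; topologically equivalent webs therefore map to topologically equivalent webs, so the induced map on equivalence classes is well defined. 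This yields that $\{A_\sigma\}$ is permuted by $D_{4n}$, i.e. the decomposition is $D_{4n}$-invariant.

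Finally, to phrase the invariance in the Coxeter--Weyl language of Sec.~\ref{S:Cox}, I would realize $D_{4n}$ as the finite reflection group $W$ whose mirrors are the walls separating the generic (codimension $0$) strata: the generic strata play the role of the chambers, $W$ acts simply transitively on them, and the closure of a chamber is a fundamental domain, matching the purely geometric construction of \cite{Combe3}. I expect the main obstacle to be precisely the descent step in (ii): the analytic action on polynomials controls only the real and imaginary parts through \eqref{E:sigma}--\eqref{E:tau}, and one must verify that it respects the \emph{topological} equivalence of webs, so that it genuinely permutes strata rather than merely acting on representatives. This is where the concrete geometry is used---that the $4n$ leaf positions sit at the $4n$-th roots of unity and that rotation by $\pi/2n=2\pi/4n$ cyclically shifts these positions by one---which is what makes the ambient-homeomorphism argument on the decorated disc go through.
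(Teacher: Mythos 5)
Your first two steps are sound and are essentially the intended route (the paper itself imports this theorem from \cite{Combe1,Combe3} and only recalls the ingredients): the Gauss stratification by equivalence classes of chord diagrams is a topological Goresky--MacPherson stratification by \cite{Combe1}, and Prop.~\ref{Prop:DAct} with equations \eqref{E:sigma}--\eqref{E:tau} identifies the action of $\frak{s}$ and $\frak{t}$ with a rotation and a reflection of the decorated disc, so equivalent webs go to equivalent webs and the strata $A_\sigma$ are permuted. Your insistence on the descent step --- that the analytic action on polynomials induces a well-defined map on equivalence classes of webs, not just on representatives --- is exactly the right point, and with it the invariance asserted in the theorem is established.

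The final paragraph, however, contains a genuine error. The chambers of the Coxeter--Weyl structure are not the generic strata, and $D_{4n}$ does not act simply transitively on the set of generic strata: generic strata are indexed by generic chord diagrams, of which there are Catalan-many ($\binom{2n}{n}/(n+1)$, per the paper's remark on parenthesised words), whereas simple transitivity would force exactly $4n$ of them --- already false for $n=2$, where one has $2$ generic strata against a group of order $8$. Likewise the mirrors are the fixed-point loci of the reflections in $D_{4n}$, not the totality of walls separating codimension-$0$ strata. The correct picture, as in the remark following the theorem and the decomposition $\cP_n=\bigcup_{s\in D_{4n}}\overline{C}_{s}$, is that each chamber is a \emph{union} of strata, there are exactly $4n$ of them, and the closure of a single chamber is a fundamental domain for the action. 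Since the theorem only asserts that the stratification is invariant under a Coxeter group of dihedral type, your steps (i) and (ii) already suffice; either drop the chamber identification or replace it by the grouping of strata into the $4n$ fundamental domains.
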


\begin{remark} 
There exist $4n$ Coxeter--Weyl chambers formed from the topological stratification and the closure of one chamber is a fundamental domain.
\end{remark}

\smallskip 

We can write this as the following decomposition:
\[\cP_n=\bigcup_{s\in D_{4n}}\overline{C}_{s},\]
where $\overline{C}_s$ stands for a chamber in the space of polynomials $\cP_n$. This terminology is borrowed from Coxeter groups. It stands for a connected space which is a fundamental domain. One can move from one chamber to another one by using the Dihedral group action on the diagrams, just as in the classical Coxeter group theory. The union of all chambers forms a gallery. 

In particular, we can define a projection morphism such as $p:\cP_n \to \cP_n/D_{4n}$.

\begin{Lem}
The morphism $p:\cP_n \to \cP_n/D_{4n}$ has a path lifting property. 
\end{Lem}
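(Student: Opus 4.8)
The plan is to reduce the statement to the general topological principle recorded in the $\clubsuit$ conditions: a \emph{finite} group acting by homeomorphisms on a Hausdorff space yields an orbit map with the path lifting property. First I would fix the setting precisely. Writing a monic polynomial $P(z)=z^n+a_{n-1}z^{n-1}+\dots+a_0$ as its coefficient vector $(a_0,\dots,a_{n-1})$ identifies $\cP_n$ with $\C^n\cong\R^{2n}$, which is Hausdorff, locally compact, locally path-connected and metrizable. The generators $\frak{s}$ and $\frak{t}$ of $D_{4n}$ from Prop.~\ref{Prop:DAct} act by $P(z)\mapsto \imath P(e^{-\imath\pi/2n}z)$ and $P(z)\mapsto \overline{P}(\overline z)$; a direct check shows each sends monic degree-$n$ polynomials to monic degree-$n$ polynomials and is invertible and real-analytic in the coefficients, hence a homeomorphism of $\cP_n$. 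Thus $D_{4n}$ acts on $\cP_n$ by homeomorphisms, and the group is finite.

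Next I would verify that this action is discontinuous in the sense of Sec.~\ref{S:2.2}. Stabilizers are automatically finite, being subgroups of the finite group $D_{4n}$. For the separation condition, fix $x\in\cP_n$: its orbit is a finite set of distinct points of the Hausdorff space $\cP_n$, so the finitely many points $\{g\cdot x : g\notin G_x\}$ can be separated from $x$ by pairwise disjoint open sets, and intersecting these with their $G$-translates produces a neighborhood $V_x$ of $x$ with $V_x\cap g\cdot V_x=\emptyset$ for all $g\notin G_x$. This is exactly the neighborhood hypothesis invoked for the orbit map.

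Then I would carry out the lifting. Let $\tilde{\mathbf a}\colon[0,1]\to\cP_n/D_{4n}$ be a path. Away from the fixed-point locus $\mathrm{Fix}=\bigcup_{g\neq 1}\{x:g\cdot x=x\}$ the action is free, so there $p$ is an honest covering map and ordinary covering-space path lifting applies. Using compactness of $[0,1]$ I would choose a subdivision $0=t_0<\dots<t_k=1$ so fine that each segment $\tilde{\mathbf a}([t_i,t_{i+1}])$ lies either in an evenly covered open set or in the image $p(V_x)$ of a small $G_x$-invariant ball about a fixed point; I then lift each segment and, at the junctions, correct the chosen preimage by the unique group element making successive local lifts agree at $t_i$ (simple transitivity of $G$ on fibers off the branch locus, and a matching preimage on it). Concatenation yields $\mathbf a$ with $p\circ\mathbf a=\tilde{\mathbf a}$.

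The main obstacle is lifting across the instants $t$ for which $\tilde{\mathbf a}(t)$ is the image of a point of $\mathrm{Fix}$, since there $p$ is ramified and not a local homeomorphism (for the reflections $\frak{t}$ the mirror has real codimension one). The point to pin down is that such a ramified neighborhood is homeomorphic to $V_x/G_x$ with $V_x$ a ball on which $G_x$ acts linearly, and that a path into a finite linear quotient of a ball always lifts: one selects the unique preimage in $\mathrm{Fix}$ at the branch instant and switches sheets continuously on either side, precisely as a path through $0$ lifts under $z\mapsto z^2$ and a path meeting a mirror lifts through a reflection quotient. With this local model established, the compactness-and-gluing argument above completes the proof, so $p$ has the path lifting property.
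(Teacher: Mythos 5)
Your global architecture is sound and is essentially an expanded version of the paper's own argument: the paper's entire proof is the one-line citation of the general principle (already recorded alongside the $\clubsuit$ conditions in Sec.~\ref{S:2.2}) that a discrete group acting discontinuously on a Hausdorff space yields a quotient map with the path lifting property, whereas you try to prove that principle from scratch. Your preliminary verifications are correct: $\cP_n\cong\C^n$ is a metric Hausdorff space, $D_{4n}$ acts by homeomorphisms, and the action is discontinuous. Note, in fact, that the action of Prop.~\ref{Prop:DAct} is not merely real-analytic but real-\emph{linear} in the coefficients ($\frak{s}\colon a_k\mapsto \imath e^{-\imath\pi k/2n}a_k$ and $\frak{t}\colon a_k\mapsto\overline{a_k}$), so the linear local model you want near a fixed point holds globally and for free.

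The genuine gap is at the step you yourself flag as ``the point to pin down''. Your mechanism for crossing the branch locus---``select the unique preimage in $\mathrm{Fix}$ at the branch instant and switch sheets continuously on either side''---tacitly assumes that the times at which $\tilde{\mathbf a}$ meets $p(\mathrm{Fix})$ are isolated. But $\tilde{\mathbf a}^{-1}(p(\mathrm{Fix}))$ is only a closed subset of $[0,1]$: it can be a Cantor set, or contain a whole interval (the path may travel \emph{inside} the branch image, e.g.\ along polynomials with real coefficients, which are fixed by $\frak{t}$), and then there is no ``either side'' and no subdivision isolating finitely many branch instants. Moreover ``the unique preimage in $\mathrm{Fix}$'' is wrong as stated: the fibre over a branch point is a full $D_{4n}$-orbit, which may contain several points with nontrivial (conjugate) stabilizers, so the choice must be forced by matching the already-constructed lift. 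The lemma you actually need---paths into the quotient of a ball by a finite linear group lift, with prescribed initial point---is true, but it is a theorem (Montgomery--Yang; see Bredon, \emph{Introduction to Compact Transformation Groups}, Ch.~II, path lifting for compact transformation groups on metric spaces), and neither of your two examples substitutes for its proof: in the $z\mapsto z^2$ case continuity at branch times is automatic because the lift is forced to $0$ there, and in the mirror case a global continuous section (the closed half-space) exists; note that here $\frak{t}$ fixes a subspace of real codimension $n$, not a hyperplane, so for $n>1$ no chamber/section shortcut is available. The efficient repair is to do what the paper does: after your (correct) verification of discontinuity, quote that theorem for the finite group $D_{4n}$ acting on the metric space $\cP_n$, rather than re-derive it by sheet-switching.
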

\begin{proof}
Indeed, since $D_{4n}$ is a discrete group acting on a Hausdorff space, it implies that the quotient map admits naturally this path lifting property.
\end{proof}

We can check that the $\clubsuit$ conditions are satisfied in this framework.  It is clear that for $P\in \cP_n$ and $s\in D_{4n}$ (not in the stabiliser) one has that for a neighbourhood $U_P$ of $P$, the property that $U_P\cap s\cdot U_P=\emptyset$.
Moreover, one also has: 

\begin{Lem}
Let $P\in \cP_n$ and consider a neighbourhood of $P$, denoted $U_P$. Then if $a_1$ and $a_2$ are paths both starting at the polynomial $P$, defined in the neighbourhood $U_P$ s.t. $p(a_1)$ and $p(a_2)$ are homotopic relative to their end points in $\cP_n/D_{4n}$. Then, there is an element $s$ in the stabiliser such that $s\cdot a_1$ and $a_2$ are homotopic in $\cP_n$ rel end points. 
\end{Lem}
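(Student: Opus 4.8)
The statement to be verified is precisely the second bullet of the $\clubsuit$ conditions for the action of $G=D_{4n}$ on $\cP_n$, so the plan is to exhibit, for each $P\in\cP_n$, a neighbourhood $U_P$ for which the implication holds, exploiting that the dihedral action is \emph{linear}. Concretely I would (i) record the local model of the action, (ii) build a convex $H$-invariant slice around $P$, and (iii) reduce the homotopy hypothesis to a statement about endpoints that convexity settles at once.

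First I would make the linear structure explicit. Writing a monic polynomial $P(z)=z^n+a_{n-1}z^{n-1}+\dots+a_0$ identifies $\cP_n$ with the open set $\C^n\smallsetminus\Sigma$, where $\Sigma$ is the discriminant hypersurface (the locus of polynomials with a repeated root). Under this identification the generators of $D_{4n}$ from Prop.~\ref{Prop:DAct} act $\R$-linearly on $\C^n\cong\R^{2n}$: the rotation $\frak{s}\colon P(z)\mapsto \imath P(e^{-\imath\pi/2n}z)$ multiplies each coordinate $a_k$ by a fixed root of unity (the $z^n$-term remaining monic), while the reflection $\frak{t}\colon P(z)\mapsto\overline{P}(\overline z)$ acts by coordinatewise complex conjugation. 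Averaging the standard inner product over the finite group $G$ yields a $G$-invariant Euclidean metric, so every element of $G$ acts as an isometry.

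Next I would construct the slice. Set $H:=\mathrm{Stab}_{G}(P)$. Since $g\cdot P\ne P$ for $g\in G\smallsetminus H$, since $P\notin\Sigma$, and since balls of a Euclidean metric are convex, I can choose $r>0$ small enough that $U_P:=B_r(P)$ (i) lies in $\cP_n$ and is convex, and (ii) satisfies $U_P\cap g\cdot U_P=\varnothing$ for all $g\in G\smallsetminus H$. As the metric is $G$-invariant and $H$ fixes $P$, the ball $U_P$ is $H$-invariant, and (ii) shows that $p|_{U_P}\colon U_P\to p(U_P)$ is exactly the quotient $U_P\to U_P/H$ by the finite stabiliser; in particular two points of $U_P$ share the same $p$-image iff they lie in one $H$-orbit. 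Now the reduction is immediate: because $p(a_1)$ and $p(a_2)$ are homotopic \emph{rel endpoints} in $\cP_n/G$ their terminal points coincide, $p(a_1(1))=p(a_2(1))$, so with $a_1(1),a_2(1)\in U_P$ there is $s\in H$ with $a_2(1)=s\cdot a_1(1)$. The translate $s\cdot a_1$ then lies in $s\cdot U_P=U_P$, starts at $s\cdot P=P$, and ends at $a_2(1)$, so $s\cdot a_1$ and $a_2$ are two paths in the convex set $U_P$ with the same endpoints; the straight-line homotopy gives $s\cdot a_1\simeq a_2$ rel endpoints in $\cP_n$, with $s$ in the stabiliser of $P$ as required.

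The one genuinely delicate point—the main obstacle—is the behaviour at symmetric polynomials $P$ with nontrivial stabiliser $H$, where $p$ is a \emph{branched} rather than an honest covering, so the usual unique-path-lifting and homotopy-lifting arguments fail. The device that circumvents this is exactly the choice of a convex, $H$-invariant linear slice: convexity trivialises the local homotopy problem once the endpoints are matched, so the full homotopy hypothesis in the quotient is never used beyond extracting the coincidence of endpoints. In writing this out I would be careful to check that the antiholomorphic reflection $\frak{t}$, though only $\R$-linear, still preserves the convex balls of the averaged metric (it does), and that $r$ is taken below the distance from $P$ to $\Sigma$ so that $U_P$ is genuinely a convex subset of $\cP_n$.
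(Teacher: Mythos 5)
Your proof is correct, and it is considerably more complete than the paper's own argument, which proceeds along a different route. The paper argues by contradiction: it invokes the preceding observation that $U_P\cap s\cdot U_P=\emptyset$ for $s$ outside the stabiliser, asserts that this disjointness would contradict the homotopy of $p(a_1)$ and $p(a_2)$ in the quotient, and from this concludes both that $s$ lies in the stabiliser and that $s\cdot a_1\simeq a_2$ rel endpoints in $\cP_n$. That argument leaves two things implicit: it never identifies which element $s$ is under discussion (it should be the element carrying $a_1(1)$ to $a_2(1)$, which exists because the endpoint images agree in the quotient), and it never actually constructs the homotopy in $\cP_n$ --- the final sentence is an assertion rather than a deduction. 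Your proof supplies exactly these missing pieces: identifying $\cP_n$ with $\C^n\smallsetminus\Sigma$, on which $D_{4n}$ acts $\R$-linearly on coefficients, averaging to get an invariant metric, and taking a small convex $H$-invariant ball below the distance to the discriminant makes the element $s$ fall out of endpoint matching plus disjointness, and convexity then yields the straight-line homotopy rel endpoints inside $U_P\subset\cP_n$. What the paper's route buys is brevity and a direct logical link to the first $\clubsuit$ condition; what yours buys is an actual verification, including the worthwhile observation that, on such a neighbourhood, only the coincidence of endpoints --- not the full homotopy hypothesis in the quotient --- is needed to reach the conclusion.
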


\begin{proof}
This statement is related to the previous one: if $P\in \cP_n$ and $s\in D_{4n}$ (where $s$ is not in the stabiliser) one has that for a neighbourhood $U_P$ of $P$, the property $U_P\cap s\cdot U_P=\emptyset$ holds.

By hypothesis, we suppose that $a_1$ and $a_2$ are paths, both starting at the given polynomial $P$ and defined in the neighbourhood $U_P$. Their image under the projection morphism $p(a_1)$ and $p(a_2)$ are homotopic relative to their end points in $\cP_n/D_{4n}$. Now suppose by contradiction that $s$ is not in the stabiliser. Then,  by the previous statement, we have $U_P\cap s\cdot U_P=\emptyset$. But this contradicts the fact that 
$p(a_1)$ and $p(a_2)$ are homotopic relative to their end points in $\cP_n/D_{4n}$. So,  $s$ lies in the stabiliser and $s\cdot a_1$ and $a_2$ are homotopic relative end points in $\cP_n$.
\end{proof}

\begin{remark}Going back to Sec.~\ref{S:Comp}, remark that the direction maps and relative distance maps are particularly important to this decomposition. Indeed, deforming each foliation of the web/ level set of the harmonic polynomials requires different tools than classical deformation theory, due to its intrinsic type of geometry (we can not express the foliations using a coordinate ring, see the discussion in ~\cite{Combe2}). The answer resides in a Hamilton--Jacobi type of equation (see \cite{Combe2}) 
\[\frac{\partial\Phi({\bf x},t)}{\partial t}+{\bf v}\nabla\Phi({\bf x},t)=0,\]
where {\bf v} is a vector field, and the level set function $\Phi({\bf x},t)$ is a real function in $\mathbb{R}^2\times \mathbb{R}^+$, corresponding respectively to the real and/or imaginary part of the complex polynomial $P$. This vector field {\bf v} plays a central role concerning:
\begin{itemize} 
\item[-] the direction maps (equation~\eqref{E:Direction})\\
\item[-] the relative distance maps (equation~\eqref{E:RelDist}) \\
\item[-] the deformation of the chord diagram.
\end{itemize}
\end{remark}

\subsection{Parenthesised words and  harmonic polynomials.}
This section is a digression in a more combinatorial flavour and can be independently considered from the rest of the paper.
 
We can prove that each forest can be identified to a parenthesized word of $n+1$ letters with $n$ parenthesis. From this it turns out that the topological realisation of the stratification of this space, using the graphs $\fF_n$ has the structure of an associahedron. Finally, it follows that the Drinfeld's Pentagon relation~\cite{Drin3} is satisfied, for any $n\geq 4$.  

\smallskip 

The following construction requires to define paths in the space $\cX_n$. Our starting point (polynomial) will always be indexed by a generic chord diagram such that the 1-edged trees connect only leaves of the boundary of the disc which are adjacent. For example, the leaf 1 is connected to the the leaf 2; the leaf 3 is connected to the leaf 4; and so on. We identify this disc to a $2n$-gon, for simplicity and call this case the ``0'' diagram. 

Label every second edge of the $2n$-gon by a letter (or a number), such that every 1-edged tree of the generic diagram is labelled by a letter (or a number). For clarity we label the edges of the chord diagram in order so that the first edge (connecting leaf 1 and leaf 2) is labelled $a$ (or 1); the edge connecting leaf 3 to leaf 4 is labelled $b$ (or 2) and so on.  

\smallskip 

Any deformation operation shrinking two level curves together say ($a$ and $b$) corresponds defines a parenthesis between the letters $a$ and $b$. For instance if we have five 1-edged trees labelled, respectively $a,b,c,d,e$, and supposing that $a$ and $b$ deform into having an intersection at a nodal point then we write it as a word $(ab)cde$.

\begin{Lem}
Any forest of $\fF_n$ can be encoded as a parenthesised word in $n$ letters. The number of pairs of (open and closed) parenthesis corresponds to the number of critical points of the corresponding polynomial. The number of letters between a pair of open and closed parenthesis indicates the multiplicity of the critical point.  
\end{Lem}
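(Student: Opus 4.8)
The plan is to read the parenthesised word directly off the planar structure of the embedded forest, taking the generic ``$0$'' diagram as the bracket-free baseline $a_1a_2\cdots a_n$ and interpreting every inner node as the insertion of one balanced pair of brackets. First I would pin down the baseline: by construction the $n$ one-edged trees of the generic diagram join the consecutive leaf-pairs $(1,2),(3,4),\dots,(2n-1,2n)$ on the boundary of the $2n$-gon, and labelling the every-second edges $a_1,\dots,a_n$ in cyclic order fixes both the $n$ letters and a cut-point that turns the cyclic order of the leaves into a linear order of the letters. This diagram records the wholly split situation in which the $n$ foliations are pairwise disjoint and the forest has no inner node, i.e.\ the empty bracketing.

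Next I would build the dictionary between the analytic and the combinatorial data. By the Cauchy--Riemann equations the branch points (inner nodes) of the level set $\{\Re P = 0\}$ are exactly the zeros of $P'$ that happen to lie on that level set; these are the critical points recorded by the diagram. Near such a point the harmonic function has the local normal form $\Re\!\bigl(c(z-z_0)^{m}\bigr)$, whose zero set is a bouquet of $2m$ rays, so a node at which $m$ foliations coalesce has valency $2m$. I would attach to each such node a single bracket pair spanning precisely the $m$ labels whose level curves meet there; the number $m$ of enclosed letters then equals the number of coalescing branches, which is the multiplicity of that critical point (half the node's valency) — this is the third assertion. Distinct inner nodes are thereby put in bijection with bracket pairs, which is the second assertion, and the number of nodes is controlled by $\deg P' = n-1$, the fully bracketed words (with $n-1$ simple nodes) corresponding to the deepest, most degenerate diagrams.

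The substantive step is to show that the bracketing so produced is always well formed and that the map forest $\mapsto$ word is a bijection onto properly nested words. The key input is the first clause of Prop.~\ref{P:3.1.5}, that the chords carry no cycle: the embedded forest is therefore non-crossing in the disc, and non-crossingness is exactly what forbids two bracket pairs from interleaving. Reading the tree from the leaves inward assigns to the labels a laminar family of blocks (any two blocks nested or disjoint), and laminar families on a linearly ordered set are the same datum as balanced parenthesisations; conversely every nested word is realised by a degeneration of the generic diagram, the existence of a harmonic polynomial with the prescribed non-crossing coalescence pattern being supplied by the bijection $\fF_n \leftrightarrow \cX_n$ of Prop.~\ref{P:3.1.5} and the Belyi-type construction cited in its proof.

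The main obstacle I anticipate is precisely this faithfulness and well-definedness step. Concretely, one must verify that the planar forest determines an \emph{unambiguous} rooted block structure on the labels — that the relation ``these foliations coalesce before those'' induced by the inner nodes is genuinely a planar rooted forest and not merely a partial order — and that the higher-valency nodes are bookkept consistently with the multiplicity count. This reduces to translating the local model $\Re\!\bigl(c(z-z_0)^{m}\bigr)$ into the combinatorial statement ``$m$ adjacent blocks fuse into one,'' and then checking that the non-crossing hypothesis promotes the resulting relation to a canonical nesting. Once that is in place the two counting assertions follow formally, since each node of valency $2m_i$ contributes one bracket pair enclosing $m_i$ letters while the zeros of $P'$ on the level set are bounded in number by $\deg P' = n-1$.
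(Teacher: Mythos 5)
Your argument is correct in substance but proceeds by a genuinely different route than the paper's. The paper proves the lemma \emph{dynamically}: starting from the generic polynomial $u_0$ indexed by the ``0'' diagram, it perturbs the coefficients along a path in $\cX_n$ so that a chosen subset of level curves coalesces at a critical point of the perturbed polynomial $u_\epsilon$, records that single event by one pair of parentheses around the letters of the merged curves, and iterates, up to the maximal $n-1$ pairs; the word is thus built as the history of an explicit degeneration. You instead read the word \emph{statically} off the embedded forest: each inner node of valency $2m$ contributes one bracket pair enclosing the $m$ letters of the branches meeting there, justified by the Cauchy--Riemann identification of nodes with zeros of $P'$ and the local model $\Re\bigl(c(z-z_0)^m\bigr)$, with nesting extracted from planarity and realizability delegated to the bijection of Prop.~\ref{P:3.1.5}. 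Your version makes both counting assertions (pairs $\leftrightarrow$ critical points, enclosed letters $\leftrightarrow$ multiplicity) immediate and defines the encoding for an arbitrary forest in one step; the paper's version gets realizability for free (every word it produces arrives together with a polynomial realizing it) and matches the use of paths in $\cX_n$ elsewhere in the section. One caveat applies to both arguments, and you correctly isolate it as the main obstacle: planarity only makes the blocks of merging letters non-crossing with respect to the \emph{cyclic} order of the $2n$ leaves, and a non-crossing block need not be an interval once the circle is cut to a line (the curves labelled $a$ and $c$ can merge at a node while $b$ stays disjoint inside a complementary region, giving a block $\{a,c\}$ that cannot be bracketed without also enclosing $b$). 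So your claim that laminar families on a linearly ordered set are the same datum as balanced parenthesisations still needs the verification that every block is consecutive --- a point the paper's deformation proof assumes just as silently when it brackets ``the letters attributed to the deformed curves.''
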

\begin{proof}
Our statement is first based on a simple evidence: that the space of polynomials is simply connected. Starting from a given point (a polynomials) we can create a loop or a path in this space. Start with the 0 diagram corresponding to a generic harmonic polynomial $u_0$. As mentioned previously, to every curve of the level set we attribute a letter (or a number). The first letter of the word is the one corresponding to the curve starting with the first leaf, the second letter is the one corresponding to the second leaf and so on.  
One can perturb the coefficients of the corresponding harmonic polynomial in such a way that a given subset of curves in the level set deforms until they intersect at a point: the critical point of the perturbed polynomial $u_{\epsilon}$, $\epsilon>0$. This is a path starting at $u_0$ and ending at $u_\epsilon$. The number of curves intersecting at this precise point determines uniquely the multiplicity of the critical point. This operation is encoded by an opened and closed parenthesis, surrounding a word made of the letters attributed to the deformed curves. The other curves remain the same up to homotopy so there is no additional parenthesis appearing. The letters in the newly built word are inscribed following the (counterclockwise) order in which the leaves have been numbered initially. 
One can iterate this procedure and obtain more parenthesis in the word. The maximal number of parenthesis is $n-1$.
\end{proof}

\begin{remark}
Note that there is another way of defining parenthesised words using only generic forests. Indeed, it is easy to prove that the number of generic forests is a Catalan number. The $n$-th Catalan number is $\frac{{2n \choose{n}}}{n+1}$ and counts the number of ways to insert $n$ pairs of parentheses in a word of $n+1$ letters. So, one can give a construction where all the generic forests are described by $n$ pairs of parentheses. 
\end{remark}

We thus conclude with the following.
\begin{Lem}
The Drinfeld pentagon relation is satisfied for harmonic polynomials in two real variables, with degree $n\geq 4$.
\end{Lem}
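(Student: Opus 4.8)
The plan is to reduce the pentagon relation to a purely combinatorial fact about the face structure of the Gauss stratification of $\cX_n$, by identifying that structure with the Stasheff associahedron and then recognising the Drinfeld pentagon as one of its two-dimensional cells. The guiding principle is that the dissipation moves on harmonic curves, which merge level curves at critical points, are exactly the associativity moves on parenthesised words; so the poset of strata should coincide with the face lattice of an associahedron, and the pentagon relation is its characteristic pentagonal $2$-face.

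First I would set up the dictionary furnished by the previous lemma: a forest in $\fF_n$ is encoded by a parenthesised word in $n$ letters, each pair of parentheses recording a critical point of the associated polynomial and the enclosed letters its multiplicity. Under this dictionary the generic strata (codimension $0$) correspond to the maximal, fully binary parenthesisations, and the Catalan count recalled in the preceding remark matches the number of vertices of the associahedron on these letters. Next I would check that an elementary dissipation, which resolves a single critical point of multiplicity two into two simple crossings (or the reverse), realises one reassociation move $\dots(xy)z\dots \leftrightarrow \dots x(yz)\dots$. This identifies the codimension-$1$ adjacencies of the stratification with the edges of the associahedron, and, passing to closures, the whole stratification poset with its face lattice.

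With the associahedron in hand the conclusion is immediate once $n\ge 4$. Choosing any four cyclically adjacent level curves, labelled $a,b,c,d$, and leaving the remaining curves in their generic configuration, the five parenthesisations $((ab)c)d$, $(a(bc))d$, $a((bc)d)$, $a(b(cd))$ and $(ab)(cd)$, together with the reassociation moves between them, bound a pentagonal two-face inside the associahedron. Since the space $\cX_n$ of harmonic polynomials is simply connected, the two edge-paths from $((ab)c)d$ to $a(b(cd))$ (one of length three, one of length two) are homotopic rel endpoints, so this pentagon commutes; transported through the fundamental groupoid of Section~\ref{S:2} and its pro-unipotent completion, the commutation of this loop is exactly Drinfeld's pentagon relation on the associator. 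For $n<4$ there are too few letters to exhibit a pentagonal face, which accounts for the hypothesis $n\ge 4$.

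The main obstacle will be the middle step: proving rigorously that the perturbation moves on the harmonic curves realise \emph{precisely} the associahedron's face structure, and in particular that the sub-stratification governing the reassociation of four adjacent curves consists of exactly five generic strata arranged cyclically into a pentagon, rather than fewer strata or a degenerate polygon. This requires the local analysis of how the critical points of $\Re P$ and $\Im P$ collide and separate under an algebraic perturbation of the coefficients (the Harnack--Viro dissipation theory invoked in Section~\ref{S:3.2}), verifying that a generic two-parameter deformation of the four curves meets exactly those five walls. Everything else --- the Catalan vertex count, the simple-connectivity used to close the pentagon, and the passage to the Betti completion --- is then bookkeeping resting on results already established.
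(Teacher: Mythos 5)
Your proposal follows exactly the route the paper itself sketches: in the opening of that subsection the authors state that forests in $\fF_n$ are identified with parenthesised words, that the topological realisation of the stratification ``has the structure of an associahedron,'' and that the pentagon relation ``follows'' for $n\geq 4$ --- which is precisely your dictionary, your associahedron identification, and your pentagonal $2$-face argument. In fact the paper gives no proof of this lemma at all (it is stated as an immediate consequence of the preceding lemma and the Catalan-number remark), so your sketch is strictly more detailed than the paper's treatment; and the obstacle you honestly flag --- rigorously verifying that the dissipation moves realise the full face lattice of the associahedron, with exactly five generic strata around each pentagonal face --- is a gap left open by the paper as well.
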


\subsection{The Fulton--MacPherson--Axelrod--Singer (\textrm{FMAS}) compactification}\label{S:Comp}
\cite{Kontsevich,Gaiffi,Sinha} present a type of compactification that fits the best our Gauss decomposition. We refer to this as 
the Fulton--MacPherson--Axelrod--Singer compactification (\textrm{FMAS}, in short). Initially it was defined by Kontsevich~\cite{Kontsevich} and Gaiffi~\cite{Gaiffi}, and was fully developed by Sinha~\cite{Sinha}. We briefly survey the construction and explain how and why it matches the best our approach. 

Given $M$ be an arbitrary manifold, equipped with a metric, a configuration space  $Conf_{n}(M)$ on $n$ labeled points $x_i$,  indexed by $[n]=\{1,\cdots, n\}$, is defined as the subspace of $I =\{x_1,\cdots,x_n\} \in M^I$ such that $x_i\ne x_j$ if $ i\ne j$. In our case we will take $M$ to be $\mathbb{R}^1$, $\mathbb{R}^2$ or a disc, equipped with a metric.

In the case where $M=\mathbb R^{N}$, a configuration space $Conf_{n}({\mathbb R^N})$ of $n$-labelled points in $\mathbb R^N$ ($N\geq 1$) is the complement to all diagonals in the real affine space $\mathbb R^{nN}$.

\smallskip 

One advantage of this compactification is that it has differential geometry properties, which do not appear in the  Fulton--MacPherson approach and which exist naturally {\it per se} in our decomposition. From the Gauss decomposition follow two typical properties of the \textrm{FMAS} compactification being:

 -- the {\it relative distance map} between three distinct points;
 
 \smallskip 
 
 -- the {\it direction between vectors map} (which we sometimes refer to as distance map for simplicity).

\smallskip 

Now, translating this into the language of Gauss decompositions, those maps are {\it indicators} of the dissipation procedure of critical points, lying on the real and imaginary parts of a polynomial in $\cP_n$. Indeed, different infinitesimal perturbations of our harmonic polynomials (and their level sets) lead to giving different  possible diagrams, indexing different strata (see Sec.4, developed in the work~\cite{Combe1} and more combinatorially~\cite{Combe3}). Therefore, those maps determine the incidence relations between different adjacent Gauss strata. 

\smallskip 

On the other side, these maps were initially used to define the $\textrm{FMAS}$ compactification. So, we use this setting for our compactification.
This will be used later for the construction of the parenthesized braids $\textsf {PaB}$ tower in the spirit of Bar-Natan's approach~\cite{BarNatan}.

\cite{Sinha}, Thm 4.4 implies that the compactified space { $\overline{Conf}_{n}(\mathbb R^N)$} is a manifold with corners such that its interior of it is {  $Conf_{n}({\mathbb R}^N)$}. Naturally, this compactification holds naturally for unordered marked points. The symmetric group $\Sigma_n$ acts canonically on $\overline{Conf}_n(\R^N)$. So, the compactifications of the configuration space of unordered points is given by $\overline{UConf}_n(\R^N)=\overline{Conf}_n(\R^N)/\Sigma_n$ and is also a manifold with corners.

The boundary $\partial  \overline{Conf}_{n}(\mathbb R^N)$ 
of this manifold is:

\begin{equation}\label{fm1}
{ \partial \overline{Conf}_{n}(\mathbb R^N)=\bigcup_{\pi\colon [n]\rightarrow [m]\atop m\leq n}\prod_{j\in [m]} \overline{Conf}_{\pi^{-1}(j)}(\mathbb R^N)\times \overline{Conf}_{m}(\mathbb R^N).}\\
\end{equation}

\smallskip

\begin{remark}\label{R:num}
The configuration spaces defined in Sec~\ref{S:3} defined initially for unordered points $UConf_n(\C)$ can be easily lifted to the case of ordered configuration spaces (see \cite{Combe4}) using the covering morphism $Conf_n(\C)\to Conf_n(\C)/\mathbb{S}_n$. One fiber can be  visualised as follows.

Take a (Gauss) chord diagram, indexing a stratum of the decomposition in Sec.~\ref{S:3}. Then, to describe each diagram we use its "combinatorial datum", obtained by labelling vertices, leaves and edges of the graph in an formal and accurate manner, so that we can determine the incidence relations between the set of vertices and edges of the graph. Therefore, this amounts to choosing a convention allowing a ``labelling'' of the vertices (and edges). The other fibers are obtained by permuting in a given way the labels of vertices/ edges.  
This construction allows us to work on the ordered configuration space without losing the symmetries of the decomposition depicted in  Sec.~\cite{S:3}.
\end{remark}

\cite{Sinha}, Sec. 6, Def. 6.3 and Rem.~\ref{R:num}, provides us with the possibility of constructing the $\textsf {PaB}$ tower, by first 
defining the doubling $ d_i$ and forgetting maps $s_i$ for the compactified configuration space $\overline{Conf}_n(M)$. This leads to the following section.

 \subsection{The $\textsf {PaB}$ tower}\label{S:4.1}

Relying on Sec.~\ref{S:Comp} we have the configuration space $Conf_n(\mathbb C)$ which is given by $n$-tuples of points lying on the complex line $\C,$ and defined as the complement in $\C^n$ to diagonals $\{\Delta_{ij}:=z_i-z_j\},$ here $(z_i)_{i=1}^n$ is a coordinate in $\C^n$. 

The collection $\overline{Conf}(\mathbb C):=\{\overline{Conf}_n(\mathbb C)\}_{n\geq 2}$ forms a tower of topological spaces:
\begin{equation}\label{tow1}
\begin{diagram}[height=2.5em,width=4em]
\overline{Conf}_2(\mathbb C) &    \pile{\rTo_{d^1}\\  \\ \rTo_{d^2}\\ \\ \rTo_{d^3}}   & \overline{Conf}_3(\mathbb C) &  \pile{\rTo_{d^1}\\  \\ \rTo_{d^2}\\ \\ \rTo_{d^3}\\ \\ \rTo_{d^4}} & \overline{Conf}_4(\mathbb C) & \dots & ,\\
\end{diagram}
\end{equation}
where $d^i_{\bullet}$ is the corresponding pushward morphism.
\smallskip 

For any finite set of cardinality $n$ we consider a locus $ \overline{Conf}_n^0(\mathbb R)\subset \overline{Conf}_n(\mathbb R)$ of strata of zero dimension. Elements of $ \overline{Conf}_n^0(\mathbb R)$ can be identified with binary $n$-trees or equivalently as a set of pairs $(\tilde{\sigma},p),$ where $\tilde{\sigma}$ is a linear order on the set of $n$ elements and $p$ is a maximal parenthesization of $\underbrace{\bullet\, \cdots\, \bullet}_{n}$\,. Analogous to \eqref{tow1} the collection  $ \overline{Conf}_n(\mathbb R)^0_{n \geq 2}$ 
with the induced morphisms \footnote{In terms of binary trees the composition law is given by a substitution of a tree.} forms a tower which will be called the \textit{tower of parenthesized permutations} (cf. \cite{BarNatan}). Note that the natural inclusion of manifold with boundary $\overline{Conf}_n(\mathbb R)\hookrightarrow \overline{Conf}_n(\mathbb C)$ defines a morphism of topological towers:
\[
 \overline{Conf}_n^0(\mathbb R)\hookrightarrow \overline{Conf}(\mathbb C).
\]
We give:
\begin{Def}[Tower of PaBs] The \textbf{tower of parenthesized braids} $\textsf {PaB}:=\{\textsf {PaB}(n),d^i_{\bullet}\}_{n\geq 2}$ in the category of pro-unipotent groupoids is defined by:
\[
\textsf {PaB}(n):=\Pi_1^B(\overline{Conf}_n(\C),\overline{Conf}^0_n(\R)).
\]
For any $n\geq 2$ we designate $\textsf {PaB}(n)$ as the level of the tower $\textsf {PaB}$.
For any $1\leq i\leq n$ the morphism $d^i_{\bullet}$ is the corresponding pushforward morphism for pro-unipotent groupoids. 
\end{Def}

\begin{remark}
Note that for the definition of $\textsf {PaB}(n)$ in itself, which does not require any pushward morphism, it is unnecessary to  pass to the compactification of the configuration spaces. One may remain with the smooth strata of $Conf_n(\C)$ (resp. $Conf^0_n(\R)$).
\end{remark}

\smallskip


\section{Hidden symmetries of  $\mathrm {GT}^{un}$ and proofs}\label{S:4}

 \,
 
{\bf Notation.} From now on---and so as to keep consistent with Sec.~\ref{S:2.1}---
 the symbol $A^n$ (as defined in Sec. \ref{S:2.1}) stands for the set of base points $\overline{UConf}_n(\mathbb R)^0$, i.e. in the fundamental groupoid we write $\Pi_1^B(\overline{UConf}_n(\mathbb C), A^n)$.
This section is devoted to the proof of the following statement. 

\smallskip 

{\bf NB.} The main theorem will be based on the construction of \cite{Combe4}, where the passage from the unordered configuration space and its Gauss decomposition to the ordered configuration space is detailed. In particular, the lifting procedure preserves the dihedral symmetries, which are naturally present in $UConf_n(\C)$ as developed in Sec. \ref{S:3} (based on the works \cite{Ber, Combe0, Combe1, Combe2}).  

\begin{Th}\label{Th:main}
Consider the tower of fundamental groupoids of the (unordered) configuration spaces $\{\Pi_1(UConf_n(\C), A^n)\}_{n\geq2}$.    
Then, each level $\Pi_1(UConf_n(\C), A^n)$ of the tower, where $n\geq 2$, is endowed with the following splitting:

\[\Pi_1(UConf_n(\C), A^n)\cong \cG^n_1 \star_{\cG^n_3}(\cG^n_2 \sslash D_{4n}),\]
where $\cG_1,\cG_2,\cG_3$ are subgroupoids of ${\Pi}_1(\overline{UConf}_n(\C),\, \overline{UConf}_n(\mathbb R)^0)$ constructed in Sec.~\ref{S:4.1} and $``\star_{\cG_3}"$ stands for the free amalgamation product. 
\end{Th}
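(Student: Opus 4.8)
The plan is to fuse the two structural engines assembled earlier: the Seifert--Van Kampen theorem for groupoids, in the amalgamated form \eqref{E:Seifert van Kampen}, and the orbit-groupoid identification of Corollary \ref{C:Bro}. The dihedral symmetry is fed in through the Coxeter chamber decomposition $\cP_n=\bigcup_{s\in D_{4n}}\overline{C}_s$ of Theorem \ref{T:Combe}, read off $UConf_n(\C)\cong\cP_n$ via Proposition \ref{Prop:DAct}. Concretely, I would exhibit an open cover of $UConf_n(\C)$ whose Seifert--Van Kampen decomposition produces the amalgamated product $\cG^n_1\star_{\cG^n_3}(-)$, and then recognise the remaining factor as the orbit groupoid $\cG^n_2\sslash D_{4n}$.

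First I would fix a $D_{4n}$-invariant open cover $\{X_1,X_2\}$ adapted to the chamber structure: $X_1$ an open neighbourhood of the union of mirrors (the reflection walls, on which the real locus $UConf_n(\R)$ sits by Lemma \ref{L:G-inv}), $X_2$ an open set whose quotient recovers the open chambers, with overlap $X_0=X_1\cap X_2$. The base points $A^n=\overline{UConf}_n(\R)^0$ must be placed so as to meet every path component of $X_0,X_1,X_2$; here one leans on the fact, from Lemma \ref{L:G-inv}, that $A^n$ is $D_{4n}$-stable and lies on the mirrors, hence meets each path component of the fixed-point set of every reflection in $D_{4n}$, which is exactly the hypothesis demanded downstream by Corollary \ref{C:Bro}.

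Next I would apply \eqref{E:Seifert van Kampen} to $\{X_1,X_2\}$, obtaining
\[
\Pi_1(UConf_n(\C),A^n)\;\cong\;\Pi_1(X_1,A_1)\star_{\Pi_1(X_0,A_0)}\Pi_1(X_2,A_2),
\]
and set $\cG^n_1:=\Pi_1(X_1,A_1)$ and $\cG^n_3:=\Pi_1(X_0,A_0)$, each a subgroupoid of $\Pi_1(\overline{UConf}_n(\C),\overline{UConf}_n(\R)^0)$. The factor $\Pi_1(X_2,A_2)$ is where the dihedral symmetry is concentrated. The two Lemmas of Section \ref{S:Cox} verify the path-lifting property and the local homotopy-lifting condition, i.e. the full $\clubsuit$ conditions for the $D_{4n}$-action, and together with the $G$-stability of Lemma \ref{L:G-inv} they license Corollary \ref{C:Bro}, identifying $\Pi_1(X_2,A_2)$ with the orbit groupoid $\cG^n_2\sslash D_{4n}$, where $\cG^n_2$ is the fundamental groupoid of the total space over $X_2$ before passing to $D_{4n}$-orbits. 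Substituting gives the claimed splitting, and functoriality of the pushforwards \eqref{psh} propagates it compatibly along the tower.

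The hard part will be the first step: choosing $\{X_1,X_2\}$ so that it is at once $D_{4n}$-invariant, compatible with the Goresky--MacPherson chamber stratification, and arranged so that the base points, which are \emph{forced} onto the mirrors (the fixed loci of the reflections), still meet every path component of each piece. There is genuine tension here, since the $\clubsuit$ conditions ask the action to be well-behaved precisely where chambers abut the walls, which is exactly where $A^n$ accumulates; one must check that near each wall the neighbourhoods $U_P$ of Section \ref{S:Cox} can be taken to witness $U_P\cap s\cdot U_P=\emptyset$ off the stabiliser while remaining compatible with the amalgamation over $\cG^n_3$. I would also need to confirm that the $\cG^n_1,\cG^n_2,\cG^n_3$ so produced agree with the subgroupoids built from the $\textsf{PaB}$ tower in Section \ref{S:4.1}, so that the statement reads in the intended terms.
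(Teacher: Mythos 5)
Your overall strategy---Seifert--van Kampen for groupoids combined with the Brown--Higgins orbit-groupoid theorem---is the same as the paper's, but your allocation of the dihedral symmetry to the pieces of the cover breaks down at the decisive step. Corollary \ref{C:Bro} identifies the orbit groupoid $\Pi_1(Y,B)\sslash G$ with the fundamental groupoid of the \emph{quotient} $Y/G$. Hence, in any splitting produced by \eqref{E:Seifert van Kampen}, the factor that is to be recognised as $\cG^n_2\sslash D_{4n}$ must be the fundamental groupoid of a piece that is itself (identified with) a quotient by $D_{4n}$, i.e.\ a fundamental domain, a single chamber. In your cover both pieces are $D_{4n}$-\emph{invariant} subspaces: $X_1$ is a neighbourhood of the mirrors and $X_2$ is the union of all $4n$ open chambers. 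Their groupoids are therefore the objects \emph{being acted on}, not orbit groupoids: applying Corollary \ref{C:Bro} to $X_2$ yields $\Pi_1(X_2,A_2)\sslash D_{4n}\cong\Pi_1(X_2/D_{4n},A_2/D_{4n})$, so $\Pi_1(X_2,A_2)$ sits on the wrong side of the isomorphism---it plays the role of $\cG^n_2$, not of $\cG^n_2\sslash D_{4n}$. Indeed no subgroupoid $\cG$ of $\Pi_1(\overline{UConf}_n(\C),\overline{UConf}_n(\R)^0)$ can satisfy $\cG\sslash D_{4n}\cong\Pi_1(X_2,A_2)$: since $D_{4n}$ acts simply transitively on the chambers, the orbit groupoid of the groupoid of the $4n$ disjoint open chambers is the groupoid of \emph{one} chamber, and realising the $4n$-chamber groupoid itself as an orbit groupoid would require a $4n$-fold replication of $X_2$ which does not exist inside the configuration space. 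If you instead shrink $X_2$ to a single chamber so that it becomes a quotient, then $\{X_1,X_2\}$ no longer covers $UConf_n(\C)$ and Seifert--van Kampen does not apply. A further concrete failure of your setup: if, as you assert, $A^n$ lies on the mirrors, then $A^n$ is disjoint from the union of \emph{open} chambers, so $A_2=A^n\cap X_2=\emptyset$ and the base-point hypothesis of \eqref{E:Seifert van Kampen} already fails for your cover.

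The paper's proof makes the opposite allocation, and that is what makes it work: the distinguished piece $X_1$ is a thickening of the \emph{single} chamber containing the $0$-dimensional strata of $\overline{UConf}_n(\R)$ (Theorem \ref{T:Combe} guarantees the chamber structure, and there is exactly one such chamber), which is a fundamental domain and is identified with the quotient $UConf_n(\C)/D_{4n}$; the other piece $X_2$ is the union of the remaining chambers. Then Proposition \ref{Prop:Qu}---that is, Corollary \ref{C:Bro} applied to the \emph{full} space $UConf_n(\C)$, not to a member of the cover---rewrites $\Pi_1(X_1,A^n)=\Pi_1(UConf_n(\C)/D_{4n},A^n/D_{4n})$ as $\Pi_1(UConf_n(\C),A^n)\sslash D_{4n}$, so that $\cG^n_2$ is the full groupoid while $\cG^n_1=\Pi_1(X_2,A^n)$ and $\cG^n_3=\Pi_1(X_0,A^n)$. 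Note also that Lemma \ref{L:G-inv} gives $D_{4n}$-\emph{stability} of $A^n$, not that the base points are pointwise fixed or lie on the walls; the paper places $A^n$ inside the thickened distinguished chamber, which is precisely what lets $A^n$ lie in the overlap $X_0$ and makes the amalgamation over $\cG^n_3$ legitimate.
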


As a consequence, and using the lifting to the ordered case depicted in \cite{Combe4}
we can say that the Grothendieck--Teichmüller group preserves the dihedral symmetries existing in $\Pi_1(\overline{Conf}_n(\C), \overline{Conf}^0_n(\R))$, for every $n\geq 2$ i.e: 

\medskip 

\begin{Th-non}[Main theorem]\label{C:Main}
Let $Conf_n(\C)$ (resp. $Conf_n(\R)$) be the configuration space of $n$ labelled marked points in $\C$ (resp. $\R$).
Let $\Pi_1(Conf_n(\C), Conf_n^0(\R))$ be the fundamental groupoid, where $ Conf_n^0(\R)$ denotes the set of base points. 
Then, the Grothendieck--Teichmüller group $\mathrm {GT}$ preserves the dihedral symmetry relations, existing in $\Pi_1(\overline{Conf}_n(\C), \overline{Conf}_n^0(\R))$, for all $n\geq 2.$
\end{Th-non}

\smallskip

\subsection{Automorphisms of $\textsf{PaB}$} Let $\{\EuScript G_n\}_{n\in \Delta}$ be a tower with values in a category $\textsf {Grt}^{un}.$ Following \cite{BarNatan} by an automorphism of a tower $\EuScript G=\{\EuScript G_n\}_{n\in \Delta}$ we understand a collection of natural equivalences $\{F_n\}_{n\in \Delta}:$
$$
F_n \colon \EuScript G_n \overset{\sim}{\longrightarrow} \EuScript G_n
$$
such that:
\begin{enumerate}[(i)]
\item $F_n$ is identity on objects.
\par\medskip 
\item The following diagrams commute:
\begin{equation}
\begin{diagram}[height=2.5em,width=2.5em]
\EuScript G_n  & &  \rTo_{}^{d^i} &  &  \EuScript G_{n+1}&  \\
\uTo_{\sim}^{F_n} & & &  & \uTo_ {\sim}^{F_{n+1}}  && \\
\EuScript G_n & &  \rTo^{d^i} &  &  \EuScript G_{n+1} & \\
\end{diagram},\quad 
\begin{diagram}[height=2.5em,width=2.5em]
\EuScript G_n  & &  \lTo_{}^{s^i} &  &  \EuScript G_{n+1}&  \\
\uTo_{\sim}^{F_n} & & &  & \uTo_ {\sim}^{F_{n+1}}  && \\
\EuScript G_n & &  \lTo^{s^i} &  &  \EuScript G_{n+1} & \\
\end{diagram}
\end{equation}
\end{enumerate} 

It is easy to see that automorphisms of a tower $\EuScript G$ form a group which we will denote by $\mathrm {Aut}(\EuScript G).$ Moreover it it easy to see that this group will be pro-algebraic (cf. \cite{BarNatan}). We give:

\begin{Def}[D. Bar-Natan~\cite{BarNatan}] 
The \textbf{(pro-unipotent) Grothendieck--Teichmüller group $\mathrm {GT}^{un}$} is the pro-algebraic group defined by the rule:
$$
\mathrm {GT}^{un}:=\mathrm {Aut}(\textsf {PaB})
$$

\end{Def} 

Note that this definition coincides (see \cite{BarNatan} \cite{BF}) with the original definition given by V. Drinfeld \cite{Drin3}. We have a natural morphism $\mathrm{GT}^{un}\longrightarrow \mathbb G_m,$ with the corresponding kernel denoted by $\mathrm{GT}^{un}_1.$ 
The group $\mathrm{GT}^{un}_1$ is pro-unipotent.

\subsection{Proof of statements}\label{S:4.1} \,
A proof of our main statement will now be given, using the tools presented in the sections above (i.e. Sec~\ref{S:2}--Sec.~\ref{S:3}).
An important building block will be to prove the following.

Consider the fundamental groupoid $\Pi_1(UConf_n(\C),\, A^n).$ 
\begin{enumerate}
\item By Prop.~\ref{Prop:DAct} there exists a dihedral group $D_{4n}$ acting on the topological space $UConf_n(\C)$. Also, it has been shown that $A^n$ is $D_{4n}$-stable.

\item One can check easily that the $\clubsuit$ conditions outlined in Sec.~\ref{S:clubsuit} are satisfied. 
\end{enumerate}
Therefore, theorems of Brown~\cite{RBro} and Brown--Higgins--Taylor~\cite{BH,HT,Taylor} (i.e. Corr.~\ref{C:Bro} concerning the orbit groupoids, see Sec.\ref{S:2.2}) can be applied. Namely, we have that:
\begin{Prop}\label{Prop:Qu} 

Take the stratification of $UConf_n(\C)$ in chambers and consider the action of $D_{4n}$ on it. 
Then, the groupoid morphism: 
\[p_{\star}:\Pi_1(UConf_{n}(\C),\, A^n) \sslash D_{4n} \xrightarrow{\cong} \Pi_1(UConf_{n}(\C)/D_{4n},\, A^n/D_{4n})\]
is a {\it canonical isomorphism}. 
 \end{Prop}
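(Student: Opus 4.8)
The plan is to recognize Proposition~\ref{Prop:Qu} as a direct instance of Corollary~\ref{C:Bro}, applied with $X = UConf_n(\C)$, group $G = D_{4n}$, and base-point set $A = A^n$. The entire argument therefore reduces to confirming that every hypothesis of that corollary has already been secured in the preceding material, together with one remaining geometric condition on fixed-point sets that deserves separate attention.

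First I would assemble the ingredients that are in place. Proposition~\ref{Prop:DAct} furnishes the action of $D_{4n}$ on $UConf_n(\C)$, equivalently on $\cP_n$ under the identification recalled there, generated by the rotation $\mathfrak{s}$ and reflection $\mathfrak{t}$ of equations~\eqref{E:sigma}--\eqref{E:tau}. Lemma~\ref{L:G-inv} shows that $A^n$ is $D_{4n}$-stable. The two lemmas immediately preceding the proposition verify the $\clubsuit$ conditions: the quotient map $p\colon \cP_n \to \cP_n/D_{4n}$ enjoys the path-lifting property because $D_{4n}$ is a discrete group acting on a Hausdorff space, and the second $\clubsuit$ condition (lifting of homotopies through the stabiliser) follows from the disjointness $U_P \cap s\cdot U_P = \emptyset$ for $s$ outside the stabiliser.

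Second---and this is the step I expect to demand the most care---I would verify the additional hypothesis of Corollary~\ref{C:Bro} that $A$ meets each path component of the fixed-point set $X^g$ for every $g \in D_{4n}$. The chord-diagram description of Section~\ref{S:3} is the natural tool here. The reflection $\mathfrak{t}$ acts by complex conjugation $P \mapsto \overline{P}(\overline{z})$, whose fixed locus consists of configurations symmetric about the real axis; the real configurations $A^n$ lie inside this locus and, by the Gauss--Lucas and Dotson analysis used in the proof of Lemma~\ref{L:G-inv}, already realize each admissible symmetric diagram. For the rotational elements $\mathfrak{s}^k$ and the mixed elements $\mathfrak{t}\mathfrak{s}^k$ one argues diagram by diagram: a fixed configuration is one whose Gauss diagram is invariant under the corresponding element of $D_{4n}$, and I would check that each path component of such a fixed locus contains a representative that is simultaneously conjugation-invariant, hence meets $A^n$. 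The delicate point is to exclude fixed components that are purely complex and disjoint from the real locus; the collinearity forced by Dotson's theorem, combined with the fact established in Lemma~\ref{L:G-inv} that the only rotation preserving real roots is the half-turn by $\pi$, is what guarantees no such component is overlooked.

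Finally, with all hypotheses in hand, I would invoke Corollary~\ref{C:Bro} directly: it yields that $\Pi_1(UConf_n(\C)/D_{4n},\, A^n/D_{4n})$ is canonically isomorphic to the orbit groupoid $\Pi_1(UConf_n(\C), A^n)\sslash D_{4n}$, which is exactly the asserted morphism $p_{\star}$. The canonicity of the isomorphism is inherited from the universal property of the orbit morphism $p$ recorded in Definition~\ref{D:orbit2}, so no further verification is needed to promote the bijection to a canonical identification.
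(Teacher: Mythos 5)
Your strategy is the same as the paper's: feed Proposition~\ref{Prop:DAct}, Lemma~\ref{L:G-inv} and the two lemmas establishing the $\clubsuit$ conditions into Corollary~\ref{C:Bro}. Where you go beyond the paper is in isolating the remaining hypothesis of Corollary~\ref{C:Bro} --- that $A^n$ must meet \emph{each path component of the fixed-point set of each element} of $D_{4n}$ --- and you are right that this is the delicate point: the paper's own proof never mentions it at all. Unfortunately, the verification you sketch for this hypothesis does not work, and in fact the hypothesis is false in this setting, so the gap is genuine.

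Concretely: the reflection $\mathfrak{t}$ acts on configurations by complex conjugation, so $\mathrm{Fix}(\mathfrak{t})$ consists of all conjugation-symmetric configurations. These decompose into path components according to the number $r$ of real points and the number $c$ of genuinely complex conjugate pairs ($r+2c=n$), because along any path inside $\mathrm{Fix}(\mathfrak{t})$ a conjugate pair can only become real by colliding on the axis, which is forbidden in a configuration space. The set $A^n$ meets only the component $r=n$, $c=0$; every component with $c\geq 1$ (for instance $\{i,-i\}$ together with $n-2$ real points) is nonempty and disjoint from $A^n$. Likewise, for $n\geq 3$ the rotation $\mathfrak{s}^4$ (rotation of the roots by $2\pi/n$) fixes the connected, nonempty locus of configurations $\{w,\zeta_n w,\dots,\zeta_n^{n-1}w\}$, $w\neq 0$, which contains no real configuration whatsoever. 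Your argument conflates ``conjugation-invariant'' with ``lying in $A^n$'' (a conjugate pair is conjugation-invariant but not real), and Gauss--Lucas/Dotson cannot exclude these components --- they exist. Nor is the hypothesis a removable technicality in Brown's theorem: already for $n=2$ and the subgroup $\langle\mathfrak{t}\rangle$, the orbit groupoid $\Pi_1(UConf_2(\C),A^2)\sslash\langle\mathfrak{t}\rangle$ is $\Z/2$ (the generator of $\pi_1\cong\Z$ is inverted by conjugation), while $UConf_2(\C)/\langle\mathfrak{t}\rangle$ deformation retracts onto a contractible set, precisely because the conjugate-pair component of $\mathrm{Fix}(\mathfrak{t})$ is invisible to $A^2$. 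So your proof is incomplete at exactly the step you flagged as delicate --- and the same untreated hypothesis undermines the paper's own two-line proof; a correct argument would have to either enlarge the base-point set $A^n$ so that it meets every fixed-point component, or avoid Corollary~\ref{C:Bro} altogether.
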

 \begin{proof}
 
 This follows from the discussion in the paragraph above: there exists a dihedral group $D_{4n}$ acting on the topological space $UConf_n(\C)$ and $A^n$ is $D_{4n}$-stable. Since  $\clubsuit$ conditions are satisfied  we apply the theorems of Brown and Brown--Higgins--Taylor.
  \end{proof}

\smallskip 
 
Ingredients for applying the Seifert van Kampen theorem for groupoids are now presented. It is necessary to choose a pair of open subspaces $X_1$ and $X_2$ of $UConf_n(\C)$ such that their union forms the space $UConf_n(\C)$ and such that their intersection is non-empty, i.e. $X_0 = X_1\cap X_2$. 

\smallskip

-- Define $X_1$ in $UConf_n(\C)$ as the quotient  $UConf_n(\C)/D_{4n}$, where the action of $D_{4n}$ on $UConf_n(\C)$ has been defined in Prop. \ref{Prop:DAct}, by the Equ. \eqref{E:sigma} and Equ. \eqref{E:tau}. 
This quotient corresponds to a fundamental domain of the Coxeter-Weyl decomposition (Sec. \ref{S:Cox}) and by Thm. \ref{T:Combe} forms a chamber. 
We choose {\it the specific chamber} containing 0 dimensional strata of $\overline{UConf}_n(\R)$ (there exists only one chamber satisfying this condition). 

\smallskip 

--  Define $X_2$ as the union of chambers in $UConf_n(\C)$ given by $\cup_{g\in D_{4n}}C_g$, where $g\neq 1_G$ 
forming the orbit space of $D_{4n}$ acting on the set of chambers of $UConf_n(\C)$. We choose it so that it is the complement to $X_{1}$.
Following the thickening of strata method of Sec. 3 \cite{Combe0} (or Sec. 4.2. \cite{Combe1}) we thicken $X_1$ so that it is a open and contains $A^n$. 

\medskip

\begin{Lem}
Let us consider $UConf_n(\C)$. 
There exist subspaces of $UConf_n(\C)$ denoted $X_1$, and $X_2$ such that:
\begin{itemize}
\item  their intersection, $X_0$, is non-empty, 
\item the union of their interiors is $UConf_n(\C)$
and such that:
\end{itemize}
\begin{equation}\label{E:Seifert van Kampen}
\, 
\Pi_1(X_1,A^n)\star_{{\small \Pi_1(X_0,A^n)}}\Pi_1(X_2,A^n)\xrightarrow{\cong}
 \end{equation}

\[\Pi_1(UConf_n(\C),\, A^n).\] 

\end{Lem}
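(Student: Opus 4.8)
The plan is to deduce the stated isomorphism directly from the groupoid Seifert--van Kampen theorem recalled in Section~\ref{S:2.1} (the isomorphism produced from the pushout of fundamental groupoids). To invoke it I must supply two subspaces $X_1,X_2$ whose interiors cover $UConf_n(\C)$, with non-empty intersection $X_0=X_1\cap X_2$, together with a base-point set meeting each path component of $X_0$, $X_1$ and $X_2$. The candidates are already fixed in the discussion above: $X_1$ is the thickened fundamental chamber of the Coxeter--Weyl decomposition of Theorem~\ref{T:Combe}, chosen to contain the zero-dimensional real strata $A^n=\overline{UConf}_n(\R)^0$, and $X_2$ is the thickened union $\bigcup_{g\neq 1_G}C_g$ of the remaining chambers, taken as the complement of $X_1$.

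First I would make the covering precise. Applying the thickening-of-strata construction of~\cite{Combe0} (Sec.~3) and~\cite{Combe1} (Sec.~4.2) to the chamber walls, both $X_1$ and $X_2$ become open, their union is all of $UConf_n(\C)$, and the overlap $X_0$ is a tubular neighbourhood of the walls separating the chosen chamber from its neighbours. Since adjacent chambers in the gallery decomposition of Section~\ref{S:Cox} always share a wall, $X_0$ is non-empty, and since a chamber is by definition a connected component of the complement of the mirrors, each of $X_1$, $X_2$ is connected after thickening. This disposes of the first two bullet points of the statement.

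The hard part will be the base-point condition: that $A^n$ meets every path component of $X_0$, $X_1$ and $X_2$. Here I would use Lemma~\ref{L:G-inv}, which shows that $A^n$ is $D_{4n}$-stable. Thus, although the zero-dimensional real strata are collected inside the chosen chamber $X_1$, their $D_{4n}$-orbit distributes representatives across the chambers constituting $X_2$ and, after thickening, into the intersection region $X_0$ as well; the collinear-root configurations identified in Lemma~\ref{L:G-inv} are exactly the points where the real locus crosses the walls, so they populate $X_0$. I expect the only delicate verification to be that the thickening can be carried out $D_{4n}$-equivariantly, so that this meeting property survives and so that $A^n$ indeed represents every path component of all three pieces; this is where the symmetry established in Section~\ref{S:3} does the real work.

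With the three hypotheses in place, the groupoid Seifert--van Kampen theorem of Section~\ref{S:2.1} applies with $A_i=X_i\cap A^n$ for $i\in\{0,1,2\}$, yielding the natural isomorphism
\[
\Pi_1(X_1,A^n)\star_{\Pi_1(X_0,A^n)}\Pi_1(X_2,A^n)\xrightarrow{\ \cong\ }\Pi_1(UConf_n(\C),A^n),
\]
which is precisely the asserted decomposition, completing the argument.
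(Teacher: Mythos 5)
Your proposal follows essentially the same route as the paper's own proof: the same choice of $X_1$ (the thickened fundamental chamber containing $A^n$) and $X_2$ (the union of the remaining chambers), verification of the covering, non-empty intersection and base-point conditions, and then the application of the groupoid Seifert--van Kampen theorem via the pushout. In fact you supply more detail than the paper, which merely asserts these checks are easy --- in particular your use of the $D_{4n}$-stability of $A^n$ (Lemma~\ref{L:G-inv}) to see that $A^n$ meets every path component of $X_0$, $X_1$, $X_2$, and your flagging of the equivariance of the thickening, address exactly the points the paper leaves implicit.
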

\begin{proof}
Consider our Coxeter stratification of $Conf_n(\C)$ into chambers. Using the description of $X_1$ and $X_2$ as above 
one can check easily that the union of the interiors of those subspaces is indeed $UConf_n(\C)$.
The intersection $X_0$ is clearly non empty since it contains $A^n$.
So, we satisfy all the conditions so to define the following pushout diagram:

\begin{center}
\begin{tikzcd}
X_{1}\cap X_2 \arrow[r, "i_1"] \arrow[d,"u_2"]
    & X_2 \arrow[d, "i_2" ] \\
 X_{1} \arrow[r, "u_1" ]
&  UConf_n(\C) .\end{tikzcd}\end{center}

Now, considering the subset $A^n$ of $UConf_n(\C)$, we have that it satisfies the condition that $A^n$ meets each path component of $X_0,\, X_{1},\, X_2$.
Therefore, we can state that the next pushout diagram can be defined in the category of groupoids:

\begin{center}
\begin{tikzcd}
  \Pi_1(X_0,A^n) \arrow[r, "i_1"] \arrow[d,"i_2"]
    & \Pi_1(X_2,A^n) \arrow[d, "u_1"] \\
  \Pi_1(X_{1},A^n) \arrow[r,"u_2"]
& \Pi_1(UConf_n(\C),\, A^n). \end{tikzcd}\end{center}

This pushout diagram construction is the key tool to apply the Seifert van Kampen theorem for groupoids.
So this gives the following isomorphism:

\[\Pi_1(X_{1},\, A^n)\star_{\Pi_1(X_0,\, A^n)}\Pi_1(X_2,\, A^n)\]
\[\xrightarrow{\cong} \Pi_1(UConf_n(\C),\, A^n). \]
\end{proof}

We can now apply Cor.~\ref{C:Bro} and Prop.~\ref{Prop:Qu} to this statement. This results in the following property:

\begin{Prop}
The fundamental groupoid $\Pi_1(UConf_n(\C),\, A^n)$ is equipped with a dihedral symmetry given by the following:  

\[\Pi_1(UConf_n(\C),\,  A^n)\cong\]\[ \Pi_1 (UConf_{n}(\C),\,  A^n) \sslash D_{4n} \star_{\Pi_1(X_0, A^n)} \Pi_1(X_2,{\small  A^n})\]
\end{Prop}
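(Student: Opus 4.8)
The plan is to combine the Seifert--van Kampen isomorphism just established with the orbit-groupoid identification of Proposition~\ref{Prop:Qu}. First I would take as input the isomorphism proved in the preceding Lemma, namely that for the chosen cover $X = X_1 \cup X_2$ with $X_0 = X_1 \cap X_2$ one has
\[
\Pi_1(X_1, A^n)\star_{\Pi_1(X_0, A^n)}\Pi_1(X_2, A^n)\xrightarrow{\cong}\Pi_1(UConf_n(\C),\, A^n).
\]
The entire content of the present statement then reduces to rewriting the first factor $\Pi_1(X_1, A^n)$ as the orbit groupoid $\Pi_1(UConf_n(\C),\, A^n)\sslash D_{4n}$.

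Second, I would exploit the specific choice of $X_1$. By construction $X_1$ is the distinguished Coxeter--Weyl chamber containing the zero-dimensional real strata, and by Theorem~\ref{T:Combe} the closure of a chamber is a fundamental domain for the $D_{4n}$-action; hence $X_1$ is homeomorphic to the quotient $UConf_n(\C)/D_{4n}$. Because $A^n$ is $D_{4n}$-stable (Lemma~\ref{L:G-inv}) and meets each path component of the fixed-point set of every element of $D_{4n}$, the restriction $A^n \cap X_1$ is canonically identified with the image $A^n/D_{4n}$ of the base points in the quotient. This gives the identification $\Pi_1(X_1, A^n) = \Pi_1(UConf_n(\C)/D_{4n},\, A^n/D_{4n})$.

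Third, I would invoke Proposition~\ref{Prop:Qu}, whose hypotheses---the existence of the $D_{4n}$-action, the $D_{4n}$-stability of $A^n$, and the verification of the $\clubsuit$ conditions carried out just above---are all in place. It supplies the canonical isomorphism
\[
p_\star\colon \Pi_1(UConf_n(\C),\, A^n)\sslash D_{4n}\xrightarrow{\cong}\Pi_1(UConf_n(\C)/D_{4n},\, A^n/D_{4n}).
\]
Composing this with the identification from the second step replaces $\Pi_1(X_1, A^n)$ by the orbit groupoid, and substituting into the Seifert--van Kampen isomorphism yields exactly the claimed splitting
\[
\Pi_1(UConf_n(\C),\, A^n)\cong \Pi_1(UConf_n(\C),\, A^n)\sslash D_{4n}\star_{\Pi_1(X_0, A^n)}\Pi_1(X_2, A^n).
\]

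I expect the main obstacle to be the bookkeeping of base points---specifically, verifying that restricting the $D_{4n}$-invariant set $A^n$ to the fundamental domain $X_1$ agrees with the quotient set $A^n/D_{4n}$ in a way compatible with the amalgamation, so that the orbit morphism $p$ and the Seifert--van Kampen maps are simultaneously the structure maps of the full subgroupoid $(\Gamma A)\sslash G$ discussed at the end of Section~\ref{S:2.2} (see also Corollary~\ref{C:Bro}). Once that compatibility is checked, the isomorphism follows formally by substitution.
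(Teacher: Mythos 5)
Your proposal is correct and follows essentially the same route as the paper: both combine the Seifert--van Kampen splitting over the cover $X_1\cup X_2$ with the orbit-groupoid isomorphism of Prop.~\ref{Prop:Qu}, using the identification of $X_1$ with the quotient $UConf_n(\C)/D_{4n}$ to rewrite the first amalgamation factor. Your explicit attention to the base-point bookkeeping ($A^n\cap X_1$ versus $A^n/D_{4n}$) is a point the paper's own proof passes over silently, but it does not change the argument.
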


\begin{proof}

1- Prop.~\ref{Prop:Qu} implies that we have the following isomorphism:
\begin{equation}\label{E:qu}
p_{\star}:\Pi_1(UConf_{n}(\C),\,  A^n) \sslash D_{4n} \xrightarrow{\cong} \Pi_1(UConf_{n}(\C)/D_{4n},\,  A^n/D_{4n}).
\end{equation}
 
The quotient space $UConf_{n}(\C)/D_{4n}$ gives  the subspace $X_1$. So, we apply Prop.~\ref{Prop:Qu} to the formula~\eqref{E:Seifert van Kampen}.

2- Applying the Seifert van Kampen relation, we get: 
\[\Pi_1(UConf_n(\C),\,  A^n) \cong \Pi_1(X_1,{\small  A^n })\star_{\Pi_1(X_0, A^n )}\Pi_1(X_2, A^n)\] and 
thus we can conclude that: 

 \begin{equation}
 \Pi_1(UConf_n(\C),\, A^n)\cong \end{equation} \[ 
\Pi_1 (UConf_{n}(\C),\, A^n) \sslash D_{4n} \star_{\Pi_1(X_0,\, A^n)} \Pi_1(X_2,\,  A^n).\]

\end{proof}
We now prove the main theorem, presented in the introduction. 
\begin{proof} 
Now, it is enough to use the construction in \cite{Combe4} in order to lift this dihedral symmetry relation to the ordered configuration space (see Rem. \ref{R:num}). The dihedral symmetries remain preserved under this procedure and hold also for the compactified configuration space (we use the compactification from Sec. \ref{S:Comp} and \ref{S:4.1} of the configuration spaces, in order to obtain a tower $\textsf {PaB}$). Now, it is enough to apply the result above in order to deduce the Main Theorem.
\end{proof}

\subsection{Conclusions}
We have shown the existence of dihedral symmetry relations, within the components on which the Grothendieck--Teichm\"uller group acts. 
We end this paper by highlighting that the dihedral symmetry relations remain preserved, independently from how were deformed the cyclotomic polynomials. This is particularly visible, when considering  the modified  Grothendieck--Teichm\"uller group~\cite{CoMa21B},
which is a combinatorial version of the (profinite) \break Grothendieck--Teichm\"uller group, encoding arithmetical data related to cyclotomic polynomials. It also turns out to somewhat {\it contain} the dihedral symmetries relations.

The latter is a ``modified profinite Grothendieck--Teichm\"uller group" $\bf{mGT}$, 
in the sense that it is obtained from a tower given by the projective limit of groups $\bf{mGT}_q$, where $q$ is a positive integer, with respect to given homomorphisms $u_{q,p}$ ($u_{q,p}:\, \bf{mGT}_q\to \bf{mGT}_p$ for each $p, q$ with $p/q$) and where the group $\bf{mGT}_q$ is defined as the {\it subgroup} of permutations of $\Z/q\Z$, generated by the following maps:

\begin{enumerate}[(a)]
\item multiplications by all elements $d\in (\Z/q\Z)^*$;

\item the involution $\theta_q:\, a\mapsto 1-a$.
\end{enumerate}

It is essential to keep in mind, the deep connection 
to the absolute Galois group which is partially encoded in $\bf{mGT}$.
Indeed, considering  the field generated by roots of unity, one replaces each group $\Z/q\Z$ by the group $\mu_q$ of roots
of unity of degree $q$: $a\,\mathrm{mod}\, q\,\mapsto \, \exp^{2\pi \imath a}.$

Then, the action of $(\Z/q\Z)^*$ becomes the action of the respective Galois group $\exp^{2\pi \imath a}\mapsto \exp^{2\pi \imath da}$.
Finally, $\theta_q$  encodes the reflection with respect to $0$ or $\infty$, rather than 1 in Ihara's works~\cite{Ihara}.

We remark that dihedral relations are as well present in the modified Grothendieck--Teichm\"uller group as is shown below:
\begin{Prop}
For any $n\geq 2$, the $\bf{mGT}_n$ inherits dihedral symmetry relations of the dihedral group $D_{2n}$.
\end{Prop}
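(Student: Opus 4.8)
The plan is to realise an explicit copy of $D_{2n}$ inside $\mathbf{mGT}_n$ using the two families of generators in its definition. First I would single out the two natural involutions. Since $\gcd(n-1,n)=1$, the residue of $-1$ always lies in $(\Z/n\Z)^{*}$, so multiplication by $-1$,
\[
\rho\colon a\longmapsto -a,
\]
is a generator of type (a); and $\theta_n\colon a\mapsto 1-a$ is the generator of type (b). Both are of order two, $\rho^{2}=\theta_n^{2}=\mathrm{id}$, and they are distinct since $\rho(0)=0$ while $\theta_n(0)=1$.

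The key step is to compute their product and show it is a translation of maximal order. One has
\[
(\theta_n\circ\rho)(a)=\theta_n(-a)=1-(-a)=a+1,
\]
so $r:=\theta_n\circ\rho$ is translation by $1$ on $\Z/n\Z$. As $r^{k}(a)=a+k$ and $r^{k}=\mathrm{id}$ exactly when $n\mid k$, the element $r$ has order precisely $n$ and generates the cyclic group $C_n$ of rotations.

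Next I would verify the defining dihedral relation. A direct computation gives
\[
(\rho\circ r\circ\rho)(a)=\rho\big(r(-a)\big)=\rho(-a+1)=a-1=r^{-1}(a),
\]
so that $\rho\, r\,\rho^{-1}=r^{-1}$. Together with $r^{n}=\rho^{2}=\mathrm{id}$ this is precisely the standard presentation of the dihedral group of order $2n$; hence $\langle\rho,\theta_n\rangle=\langle r,\rho\rangle\cong D_{2n}$ is a subgroup of $\mathbf{mGT}_n$, which is exactly the assertion that $\mathbf{mGT}_n$ inherits the dihedral relations of $D_{2n}$.

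Finally, to make the word \emph{inherits} precise I would match these two involutions with the geometric generators of Proposition~\ref{Prop:DAct}: the multiplicative action of $(\Z/n\Z)^{*}$ plays the role of the rotation $\mathfrak{s}$ of \eqref{E:sigma}, while $\theta_n$---which, as recorded in the concluding discussion, encodes the reflection about $0$ (or $\infty$)---plays the role of the reflection $\mathfrak{t}$ of \eqref{E:tau}. The only subtlety, and the single point worth checking carefully, is that $r$ has order exactly $n$ rather than a proper divisor and that $\rho\neq\theta_n$; both are immediate from the arithmetic of $\Z/n\Z$, so the argument is an elementary but structural group computation rather than a deep one.
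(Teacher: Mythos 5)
Your proof is correct for all $n\geq 3$, and it takes a genuinely different --- and strictly stronger --- route than the paper. The paper's own proof is a two-line observation: $\mathbf{mGT}_n$ is a subgroup of the permutation group of $\Z/n\Z$, that permutation group contains $D_{2n}$, hence $\mathbf{mGT}_n$ ``inherits'' dihedral relations. Read literally this is a non sequitur: a subgroup of a group containing $D_{2n}$ need not itself contain, or bear any trace of, a dihedral subgroup. Your argument repairs exactly this defect by locating the dihedral group \emph{inside} $\mathbf{mGT}_n$, built from the defining generators themselves: the type (a) generator $\rho\colon a\mapsto -a$ and the type (b) generator $\theta_n\colon a\mapsto 1-a$, with $r=\theta_n\circ\rho$ the translation $a\mapsto a+1$ of order exactly $n$ and $\rho r\rho^{-1}=r^{-1}$. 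One point you gesture at but should make explicit: verifying the relations of a presentation only exhibits $\langle r,\rho\rangle$ as a \emph{quotient} of $D_{2n}$, so you must rule out collapse; this follows because $\rho\in\langle r\rangle$ together with $\rho r\rho^{-1}=r^{-1}$ would force $r^2=\mathrm{id}$, impossible once $r$ has order $n\geq 3$. With that one sentence added, your construction proves honest containment $D_{2n}\hookrightarrow \mathbf{mGT}_n$, which is more than the paper establishes.

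Two caveats. First, at $n=2$ your construction degenerates: $-1\equiv 1$ in $\Z/2\Z$, so $\rho=\mathrm{id}$ and $\langle\rho,\theta_2\rangle$ has order $2$, not $4$; indeed the full permutation group of $\Z/2\Z$ has order $2$, so no copy of the order-$4$ dihedral group can exist there, and the proposition (read as containment) fails for $n=2$ under \emph{any} argument, the paper's included. So your proof, like the paper's statement, should really be asserted for $n\geq 3$. Second, your closing interpretive paragraph assigns the ``rotation'' role to the multiplicative action; in your copy of $D_{2n}$ the rotation is the translation $r=\theta_n\circ\rho$, while both $\rho$ and $\theta_n$ are reflections (consistent with the paper's remark that, on roots of unity, $\theta_q$ is a reflection and $a\mapsto -a$ is complex conjugation). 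This does not affect the proof, but the matching with $\mathfrak{s}$ and $\mathfrak{t}$ of Prop.~\ref{Prop:DAct} as stated is not the correct one.
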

 \begin{proof}
 Indeed, $\bf{mGT}_n$ is a subgroup of permutations of $\Z/n\Z$. The group of permutations contains the dihedral group $D_{2n}.$
 So, $\bf{mGT}_n$ inherits some dihedral symmetry relations. 
 \end{proof}
 
Sec. 6~\cite{CoMa21B} defines the entire tower constructing $\bf{mGT}$. Applying this, we can say that:
 \begin{Cor}
 The avatar of the Grothendieck--Teichm\"uller group  $\bf{mGT}$ comes equiped with dihedral relations, in the sense that each $\bf{mGT}_n$ inherits dihedral symmetry relations from the dihedral group $D_{2n}$. 
 \end{Cor}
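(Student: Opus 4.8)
The plan is to promote the level-wise statement of the preceding Proposition to the entire tower, the one genuine verification being that the dihedral subgroups are compatible with the structure maps $u_{q,p}$ of the inverse system defining $\bf{mGT}$. First I would make the copy of $D_{2n}$ inside each $\bf{mGT}_n$ completely explicit, rather than merely invoking that $D_{2n}$ sits in the symmetric group on $\Z/n\Z$. Recall that $\bf{mGT}_n$ is generated by the multiplications $a\mapsto d\,a$ with $d\in(\Z/n\Z)^{*}$ together with the involution $\theta_n\colon a\mapsto 1-a$. I would single out $s=\theta_n$ and $t\colon a\mapsto -a$ (multiplication by $-1$, always available), observe that the composite $s\,t\colon a\mapsto a+1$ is the order-$n$ rotation, and check the relation $t\,(st)\,t=(st)^{-1}$; hence $\langle s,t\rangle\cong D_{2n}$ is realised by the defining generators of $\bf{mGT}_n$.

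Next I would recall from Sec.~6 of \cite{CoMa21B} that $\bf{mGT}=\varprojlim_q\bf{mGT}_q$ is the projective limit taken along the homomorphisms $u_{q,p}\colon \bf{mGT}_q\to\bf{mGT}_p$ (for $p\mid q$), which are induced by the reduction $\Z/q\Z\to\Z/p\Z$. The crucial compatibility step is to confirm that $u_{q,p}$ sends the distinguished generators of $\bf{mGT}_q$ to those of $\bf{mGT}_p$: reduction carries $\theta_q\mapsto\theta_p$ (each is $a\mapsto 1-a$) and multiplication by $-1$ to multiplication by $-1$. Consequently $u_{q,p}$ restricts to the evident surjection $D_{2q}\twoheadrightarrow D_{2p}$, rotation to rotation and reflection to reflection, so the family $\{D_{2q}\hookrightarrow\bf{mGT}_q\}_q$ is a sub-inverse-system. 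Passing to the limit, $\bf{mGT}$ inherits the profinite dihedral-type subgroup $\varprojlim_q D_{2q}$, and in particular each level $\bf{mGT}_n$ carries the relations of $D_{2n}$, which is the content of the statement.

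The step I expect to be the main obstacle is precisely this compatibility check: one must be sure that the structure maps of the tower in \cite{CoMa21B} really are the reduction-induced maps on permutations of $\Z/q\Z$, with no auxiliary twisting, since only then do the distinguished generators $\theta_q$ and $a\mapsto -a$ match across levels. Granting that, the argument is formal---the inclusions $D_{2q}\hookrightarrow\bf{mGT}_q$ are natural in $q$ and therefore descend to the projective limit, giving the claimed dihedral relations on $\bf{mGT}$.
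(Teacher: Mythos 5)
Your proposal is correct, and it follows the same two-step skeleton as the paper: first the level-wise statement (the paper's preceding Proposition), then the passage to the tower $\bf{mGT}=\varprojlim_q \bf{mGT}_q$ via Sec.~6 of \cite{CoMa21B}. However, your level-wise argument is genuinely different from, and stronger than, the paper's. The paper's Proposition is proved only by observing that $\bf{mGT}_n$ and $D_{2n}$ are both subgroups of the permutation group of $\Z/n\Z$, and concluding that $\bf{mGT}_n$ ``inherits'' dihedral relations --- an argument that by itself does not exhibit any dihedral group inside $\bf{mGT}_n$. You instead realise $D_{2n}$ explicitly from the defining generators: $s=\theta_n\colon a\mapsto 1-a$ and $t\colon a\mapsto -a$ (multiplication by the unit $-1$), with $st\colon a\mapsto a+1$ the order-$n$ rotation and $t(st)t=(st)^{-1}$, so $\langle s,t\rangle$ is an honest dihedral subgroup of $\bf{mGT}_n$. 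You then verify that these subgroups are compatible with the structure maps $u_{q,p}$ (reduction sends $\theta_q\mapsto\theta_p$ and $-1\mapsto -1$), so they form a sub-inverse-system and yield a profinite dihedral subgroup $\varprojlim_q D_{2q}$ of $\bf{mGT}$ itself. This buys a sharper conclusion than the paper states (an actual dihedral subgroup of the limit, not merely relations at each level) and it closes the logical gap in the paper's Proposition; the paper, by contrast, gets away with a one-line argument precisely because its statement is weaker. Two small caveats: the degenerate level $n=2$, where $a\mapsto -a$ is the identity and $\langle s,t\rangle$ collapses to order $2$, should be excluded or treated separately; and your argument does rest on the structure maps $u_{q,p}$ being the reduction-induced maps with no twisting --- you flag this correctly, and it is consistent with the paper's description of the tower (the maps $a\mapsto da$ and $\theta_q$ are exactly what descends along $\Z/q\Z\to\Z/p\Z$), but it is the one point that must be checked against \cite{CoMa21B}.
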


\bibliographystyle{amsalpha}

\bibliography{ttv10.bib}

\end{document}